%
%
\documentclass[moor,sglanonrev]{informs4}
\usepackage{eqndefns-left} 
\RequirePackage{tgtermes}
\RequirePackage{newtxtext}
\RequirePackage{newtxmath}
\RequirePackage{bm}
\RequirePackage{endnotes}

\OneAndAHalfSpacedXII 

\usepackage[ruled]{algorithm2e}
\usepackage{hyperref}
\usepackage{tikz}
\usepackage{caption}
\usepackage{subcaption}
\usepackage{float}
\captionsetup[sub]{font=small}

\usepackage[sort&compress]{natbib}
 \bibpunct[, ]{[}{]}{,}{n}{}{,}%
 %
 %
 %
 %
 %

\EquationsNumberedThrough    

\TheoremsNumberedThrough     
\ECRepeatTheorems  %

\MANUSCRIPTNO{MOOR-0001-2024.00}

\begin{document}


\RUNAUTHOR{Di Zhang, Yihang Zhang and Suvrajeet Sen}

\RUNTITLE{Adaptive Sampling-based PHA for SP}

\TITLE{An Adaptive Sampling-based Progressive Hedging Algorithm for Stochastic Programming}

\ARTICLEAUTHORS{%
\AUTHOR{Di Zhang}
\AFF{Department of Industrial and Systems Engineering, University of Southern California \EMAIL{dzhang22@usc.edu}}

\AUTHOR{Yihang Zhang}
\AFF{Department of Industrial and Systems Engineering, University of Southern California \EMAIL{zhangyih@usc.edu}}

\AUTHOR{Suvrajeet Sen}
\AFF{Department of Industrial and Systems Engineering, University of Southern California \EMAIL{suvrajes@usc.edu}}
} 

\ABSTRACT{%
The Progressive Hedging algorithm is a cornerstone in the realm of large-scale stochastic programming. However, its traditional implementation faces several limitations, including the requirement to solve all scenario subproblems in each iteration, reliance on an explicit probability distribution, and a convergence process that is highly sensitive to the choice of proximal parameter. This paper introduces a sampling-based PH algorithm aimed at overcoming these challenges. Our approach employs a dynamic sampling process for the number of scenario subproblems solved in any iteration, incorporates adaptive sequential sampling to determine sample sizes, utilizes a stochastic conjugate subgradient method for direction finding, while applying a line-search technique to update the dual variables. Experimental results demonstrate that this novel algorithm not only addresses the bottlenecks mentioned above but also potentially overcomes its scalability for large data sets.
}%

\FUNDING{This research was supported by grant number DE-SC0023361, sub-award G002122-7510 from the Dept. of Energy}



\KEYWORDS{Stochastic programming, Progressive Hedging Algorithm, Adaptive Sequential Sampling, Stochastic Conjugate Subgradient Method} 

\maketitle
\section{Introduction}
\subsection{Problem setup}
In this paper, we focus on solving stochastic programming (SP) problems stated as follows.
\begin{equation} \label{SP}
\min_{x \in X \subseteq \mathbb{R}^n} f(x) = E_{\tilde \omega}[h(x,\tilde \omega)],
\end{equation}

\noindent where $X$ is a non-empty convex set, $\tilde\omega \in \mathbb{R}^m$ denotes a random vector, and $h(\cdot,\tilde \omega)$ is a finite-valued convex function in $x$ for any particular realization $\tilde \omega = \omega$. Except in very simplistic situations (e.g., when $\tilde \omega$ has a small number of outcomes), solving the optimization problem  \eqref{SP} using deterministic algorithms may be infeasible. To overcome challenges arising from efforts to solve realistic instances, the focus often shifts towards solving a sample-average approximation (SAA) problem. In such an approach, we create an approximation by using $|S|$ i.i.d. realizations of $\tilde \omega$ called scenarios, and denote them by $\omega^1,...,\omega^{|S|}$. In this setting, \eqref{SP} can be reformulated as 

\begin{equation} \label{SP2}
    \min_{x \in X} f(x) = \frac{1}{|S|} \sum_{s=1}^{|S|} h(x, \omega^s).
\end{equation}

\noindent In order to obtain an optimal solution of \eqref{SP2}, many algorithms have been proposed, and one of the more popular algorithms among them is the Progressive Hedging (PH) algorithm~\cite{RW1991}.

The goal of this paper is to present a PH-based algorithm that enables adaptive sampling, allowing PH to accommodate both continuous and discrete random variables with equal ease, while incorporating many of the strengths of the original PH algorithm. To lay the groundwork, we will commence by reviewing the classic PH algorithm.  Following that discussion, we will briefly summarize some of the key elements which constitute the major innovations necessary for all pieces of the new algorithm to dovetail with each other.  Once this overview is completed, we will embark on the new mathematical constructs which constitute the main contributions of this paper.

 \subsection{Classic progressive hedging algorithm}
The PH Algorithm is a predecessor of the alternating direction method of multipliers (ADMM)~\cite{B2011} which has attracted significant attention in the optimization community, especially in the SP community, and extensively used for addressing scenario-based uncertainty in large-scale optimization. Originally conceptualized by Rockafellar and Wets in 1991~\cite{RW1991}, the algorithm has evolved significantly, finding applications in diverse fields such as finance, energy, logistics~\cite{WW2011} and others. The convergence properties of the PH algorithm have been rigorously studied since its inception. Rockafellar and Wets provided the initial proof of convergence for convex problems, and more recently for non-convex and other generalizations (e.g., Ruszczyński and Shapiro~\cite{RS2003}).  However, despite its widespread popularity, the classic PH algorithm remains a deterministic method which immediately limits its applicability for broader stochastic optimization problems, especially those for which sampling may be critical. Such instances arise in applications where the uncertainty can only be sampled via a simulation or the sample space is so vast that it is unrealistic to expect to have knowledge of all possible outcomes (and probability distribution) associated with the data.  More specifically, consider the following limitations. 

\begin{itemize}
    \item The convergence criteria of the PH algorithm are predicated upon the accessibility and knowledge of the probability distribution of the random variables involved. This requirement severely restricts the algorithm's utility in situations where such a distribution remains unknown or can only be simulated, thereby restricting its range of applications.
    
    \item  Second, in contrast to several SP algorithms, notably stochastic gradient descent (SGD), the classic PH algorithm requires the solution of all subproblems during each iteration. This requirement constitutes a significant computational bottleneck, and substantially impedes its scalability for cases in which the number of scenarios are far too many to enumerate in each iteration. 

    \item  Lastly, the algorithm employs a dual update rule that is fundamentally gradient-based and may only be effective locally (i.e., at the current dual iterate). This approach makes the convergence process highly sensitive to the selection of the proximal parameter, which functions as the step size in the dual update. Consequently, the algorithm's performance depends heavily on the choice of the proximal parameter, further complicating its practical implementation.  The reader may consult Naepels and Woodruff \cite{naepels_gradient-based_2023} which provides a history of heuristics which have been investigated over the past decade.
    
    For instances with a large number of scenarios (or even continuous random variables), an adaptive sampling algorithm may be more suitable because such methods allow for the possibility of creating approximations which are able to sequentially adapt to the data observed during the course of the algorithm. In this paper, we will introduce a sampling-based PH algorithm which {\it adaptively} samples a subset of scenario subproblems in each iteration, facilitating a dynamic selection of scenarios, as well as updates to enhance computational realism for truly large-scale optimization (with far too many outcomes to enumerate in each iteration). 

\end{itemize}
\subsection{Adaptive sampling}
When the probability distribution is not available, the PH algorithm typically employs sample average approximation (SAA)~\cite{S1998} which has a fixed sample size to estimate the uncertainty. However, it is difficult to determine the sample size~\cite{S2016,DS2024} when applying such methods to the PH algorithm and convergence properties have been established only for cases
with a finite number of outcomes. While  adaptive sampling~\cite{M1999} has emerged as a powerful approach for instances with a large number of outcomes (especially for multi-dimensional uncertainty, see~\cite{HS1991, HS1994, peng2024asymmetric,lin2024economic}), such adaptive sampling has remained elusive in the context of PH. Adaptive sampling dynamically adjusts the sample size based on the ongoing optimization progress, and has also been designed for variance reduction by introducing the notion of compromise decisions \cite{sen2016}. 

It so happens that there have been some efforts devoted to combining adaptive sampling with the PH algorithm. For instance, \cite{B2020} provides randomized PH methods which is able to produce new iterates as soon as a single scenario subproblem is solved. \cite{A2012} proposes a SBPHA approach which is a combination of replication and PH algorithm. While these methods provide ``randomized" PH algorithms, they are restricted to sampling from a \textit{fixed, finite collection} of scenarios, without adaptively determining the sample size based on the optimization instance. On the other hand, our sampling-based PH algorithm will strategically sample a subset of scenario subproblems in each iteration, facilitating dynamic scenario updates to enhance computational efficiency. We will also leverage sample complexity analysis~\cite{V1998} to support the choice of the ultimate sample size used at termination. This approach ensures a theoretically grounded, and computationally realistic approach.

\subsection{Stochastic conjugate subgradient method}

The dual update of the classic PH algorithm can be considered as an iteration with a constant step size in the gradient direction. The contraction property induced by the regularization can be used to prove convergence of the deterministic PH algorithm. 
While such a gradient-based dual update can also be adopted in the sampled setting as in \cite{B2020}, it is unclear whether convergence of the overall sampling-based PH algorithm can be proved in the case of infinitely many scenarios (or continuous random variables).  

On the other hand, one might consider using the reasonably obvious strategy of updating the dual multipliers of PH by using a stochastic first-order (SFO) method.  However, such a modification would not yield the perfect analogy of the deterministic PHA in the sense that we wish to preserve a fundamental property of PHA, namely, its contraction property. By combining the PHA with a new stochastic conjugate subgradient algorithm \cite{DS2024}, we will devise a complete analog of the deterministic PHA in the sense that it will also ensure a stochastic contraction property for the new sampling-based PHA.  


In contrast to first-order methods, algorithms based on conjugate (sub)gradients (CG) provide finite termination when the objective function is a linear-quadratic deterministic function~\cite{N2006}. Furthermore, CG algorithms are also used for large-scale non-smooth convex optimization problems~\cite{W1975, Y2016}. However, despite their many strengths (e.g., fast per-iteration convergence, frequent explicit regularization of step size, and better parallelization than first-order methods), the use of conjugate subgradient methods for SP is uncommon. Although~\cite{J2018, Y2022} proposed some new stochastic conjugate gradient (SCG) algorithms, their assumptions do not support non-smooth objective functions, and are, therefore, not directly applicable to our class of problems.

To solve stochastic optimization problems where functions are non-smooth (as in \eqref{SP2}), one might consider using a stochastic subgradient method. In this paper, we treat the dual optimization problem in PH as a stochastic linear-quadratic optimization problem and propose leveraging a stochastic conjugate subgradient (SCS) direction, originating from~\cite{W1974, W1975, ZS2024, ZS, zhang2024stochastic}, which accommodates both the curvature of a quadratic function and the decomposition of subgradients by data points, as is common in stochastic programming~\cite{HS1991, HS1994}. This combination enhances the power of stochastic first-order approaches without the additional burden of second-order approximations. In other words, our method shares the spirit of Hessian-Free (HF) optimization, which has gained attention in the machine learning community~\cite{MJ2010, CScheinberg2017,zhang2024general}. As our theoretical analysis and computational results reveal, the new method provides a ``sweet spot" at the intersection of speed, accuracy, and scalability of algorithms (see Theorems \ref{Convergence rate} and \ref{optimality}).

\subsection{Penalty parameter and line-search algorithm}

In the original Progressive Hedging (PH) algorithm, Rockafellar and Wets briefly discussed the effect of the penalty parameter on the performance of the PH algorithm and the quality of the solution, but they did not provide any guidance on how to choose it. Since then, various choices have been proposed, often differing depending on the application. Readers interested in the selection of the penalty parameter can refer to~\cite{Z2016}, which offers a comprehensive literature review of related methods.

Our approach differs from these methods by employing a stochastic direction-finding technique~\cite{PS2020, B2014} derived from Wolfe's line-search method~\cite{W1974, W1975}. Compared with existing approaches, our method is more mathematically motivated and adheres closely to optimization principles~\cite{B2004}.

\subsection{Convergence Analysis using Modern Stochastic Programming Framework}

The classic PH algorithm utilizes the deterministic fixed-point theorem to analyze its convergence properties~\cite{RW1991}. However, this approach cannot be directly applied to our method due to fundamental differences between the two algorithms. While the classic PH algorithm generates a fixed number of scenarios at the beginning, our method dynamically adds new scenarios throughout the iterations. As a result, even for scenarios with the same realization $\omega$, their corresponding dual variables can differ significantly in our approach. This discrepancy prevents the direct application of the deterministic fixed-point theorem.

To address this challenge, we adopt a modern stochastic programming framework to establish the convergence rate~\cite{PS2020, B2014}. Specifically, we define a sequence of probability spaces corresponding to a stochastic process. With these well-defined probability spaces and the associated stochastic process, we demonstrate the submartingale property of the dual objective value sequence. By leveraging this property, we apply the optional stopping theorem and the renewal reward theorem~\cite{GGD2020} to rigorously establish the convergence rate of the proposed algorithm.

\section{Adaptive Sampling-based Progressive Hedging Algorithm}

In this section, we focus on solving (\ref{SP}) using a sampling-based PHA in which we combine classic PHA~\cite{RW1991} with adaptive sampling in stochastic programming \cite{BL2011}. The main ingredients of the algorithm include six important components:
\begin{itemize}
\item Sequential scenario sampling. At iteration $k$, different from classic PH Algorithm which solves all scenario subproblems, we will sample $|S_k|$ scenarios and only solve the subproblems corresponding to these scenarios. This is similar to using sample average approximation (SAA) to approximate function $f$ using $f_k$,  
\begin{equation} \label{OSAA}
    f_k(x)= \frac{1}{|S_k|} \sum_{s=1}^{|S_k|} h(x, \omega^s),
\end{equation}
\noindent where $|S_k|$ will be determined based on concentration inequalities in Theorem \ref{concentration inequality}.  

\item Update the primal variables. For each scenario $s \in S_k$, similar to the classic PHA, we first solve the corresponding augmented Lagrangian problem 

\begin{equation} \label{x_k^s}
    x_{k}^s = \argmin_{x^s \in X} h(x^s, \omega^s) + \langle \lambda_k^s, x^s - \bar{x}_{k-1} \rangle + \frac{\rho}{2} ||x^s - \bar{x}_{k-1}||^2.
\end{equation}

Then a feasible solution is 
\begin{equation} \label{x_k}
    \bar{x}_k = \frac{1}{|S_k|} \sum_{s=1}^{|S_k|}  x_k^s.
\end{equation}





\item Conjugate subgradient direction finding. The idea here is inspired by Wolfe's non-smooth conjugate subgradient method, which uses the smallest norm of the convex combination of the previous search direction $d_{k-1}^s$ and the current subgradient 

\begin{equation} \label{g_k^s}
    g_k^s = x_k^s - \bar{x}_k.
\end{equation}
More specifically, we first solve the following one-dimensional QP

\begin{equation} \label{gamma_k^s}
\gamma_k^s = \argmin_{\gamma^s \in [0,1]}\frac{1}{2}||\gamma^s  d_{k-1}^s + (1-\gamma^s) g_k^s||^2.
\end{equation}
The new direction can be defined as 
\begin{equation} \label{d_k^s}
    d_k^s=\gamma_k^s d_{k-1}^s +(1-\gamma_k^s)g_k^s := Nr(G_k^s),
\end{equation}

\noindent where $G_k^s=\{g_k^s,d_{k-1}^s\}$ and $Nr$ is an operator that selects the vector from the set with the smallest 2-norm.

\item Update the dual variables using line-search. We update $\hat{\lambda}_{k+1}^s = \lambda_k^s + \theta_k^s d_k^s$. Define 
\begin{equation} \label{line-search}
        L_k^s(\lambda_k^s + \theta^s d_k^s) :=  h(\hat{x}_k^s, \omega^s) + \langle \lambda_{k}^s+\theta^s d_k^s, \hat{x}_k^s - \hat{x}_{k}\rangle  + \frac{\rho}{2} ||\hat{x}_k^s - \hat{x}_{k}||^2,
\end{equation}

\noindent where $\delta_k^s$ is the stochastic searching region, $\hat{x}_k^s$ and $\hat{x}_{k}$ are obtained by replacing $\lambda_{k}^s$ in \eqref{x_k^s} with $\lambda_k^s + \theta d_k^s$ in each scenario and resolving \eqref{x_k^s} and \eqref{x_k}, respectively. The step size $\theta_k^s $ will be obtained by performing an inexact line-search which makes it satisfy the strong-Wolfe condition~\cite{W1974,W1975}. Let $g^s(\theta^s) \in \partial_{\lambda^s} L_{k}^s(\lambda_{k}^s + \theta^s \cdot d_k^s)$ and define the intervals $L$ and $R$.
\begin{equation} \label{strong-wolfe condition}
    \begin{aligned}
        & L=\{\theta^s >0 \ | \ L_{k}^s(\lambda_k + \theta^s \cdot d_k)-L_{k}^s(\lambda_k) \geq m_1 ||d_k^s||^2 \theta^s \},\\
        & R =\{\theta^s >0 \ | \  \sum_{s=1}^{S_k} \langle g^s(\theta^s),d_k^s \rangle \leq m_2  ||d_k^s||^2 \},
    \end{aligned}
\end{equation}
\noindent where $m_2 < m_1 < 1/2$.  The output of the step size should satisfy two conditions: (i) $L$ includes the step sizes which sufficiently increase the dual objective function approximation $L_{k}^s$, and (ii) $R$ includes the step sizes for which the directional derivative estimate for $L_{k}^s$ is sufficiently decreased. The algorithm seeks points that belong to $L \cap R$. The combination of $L$ and $R$ is called strong-Wolfe condition and it has been shown in \cite{W1975} (Lemma 1) that $L \cap R$ is not empty. Intuitively, the above procedure is a line-search along conjugate direction $d_k^s$ and the goal is to increase the dual objective function $L_k^s$.

\item Stochastic search region update. Similar to the standard stochastic trust region method~\cite{B2014,BC2016}, let $\gamma>1$, $\eta \in (0,1)$, $\lambda_k := (\lambda_k^1,...,\lambda_k^{|S_k|})$ and $L_k(\lambda_k):= \sum_{s=1}^{|S_k|} L_k^s(\lambda_k^s)$, if 

\begin{equation} \label{trust region rule}
    L_k(\hat{\lambda}_{k})-L_k(\lambda_{k-1}) > \eta [L_{k-1}(\hat{\lambda}_{k})-L_{k-1}(\lambda_{k-1})],
\end{equation}
set $\delta_k = \gamma \delta_{k-1}$ and $\lambda_k = \hat{\lambda}_k$. In this case we say iteration $k$ of the algorithm is \textit{successful}. Otherwise, set $\delta_k =  \delta_{k-1}/\gamma$ and $\lambda_k = \lambda_{k-1}$.

\item Termination criteria. Define $d_k := (d_k^1,...,d_k^{|S_k|})$ and $||d_k|| := \frac{1}{|S_k|} \sum_{s=1}^{|S_k|} ||d_k^s||$. The algorithm concludes its process when $||d_k||<\varepsilon$ and $\delta_k = \delta_{min}$. As we will show in Theorem \ref{optimality}, a diminutive value of $||d_k||$ indicates a small norm of $g_k$, fulfilling the optimality condition for an unconstrained convex optimization problem. Additionally, a threshold $\delta_{min}$ is established to prevent premature termination in the early iterations. Without this threshold, the algorithm may halt prematurely even if $||d_k|| < \varepsilon$. However, if $\delta_{min}$ is introduced, based on Theorem \ref{concentration inequality}, $|S_k|$ should be chosen such that 
\begin{equation*}
    |S_k| = -8\log(\varepsilon/2)\cdot \frac{M_1^2}{\kappa^2\delta_k^4},
\end{equation*}   

\noindent where $\kappa$ is the $\kappa$-approximation defined in Definition \ref{kappa approximation}. This effectively mitigates any early stopping concerns with high probability.
\end{itemize}

\begin{algorithm}[h]
\small
\caption{Adaptive Sampling-based Progressive Hedging Algorithm}
\label{SPHA}
Initialize $ \bar{x}_0 \in W$, $\lambda_0 = (\lambda_0^1,...,\lambda_0^{|S_0|}) = \vec{0} $, $d_0 = (d_0^1,d_0^2,...,d_0^{|S_0|})$, $\rho > 0$, $\gamma > 1$, $\eta \in (0,1)$, $\delta_0 \in  [\delta_{min}, \delta_{max}]$, $\varepsilon > 0$ and $k = 1$.

\While{$||d_k|| > \varepsilon $}
{   

    Sample new scenarios $\omega^{|S_{k-1}|+1} \dots \omega^{|S_k|}$. Update scenario set $S_k := S_{k-1} \cup \{ \omega^{|S_{k-1}|+1} \dots \omega^{|S_k|} \}$

    $\lambda_{k-1}^s = \vec{0} \quad \forall s = |S_{k-1}|+1, \dots, |S_k|$
    
    $x_k^s = \argmin_{x^s \in X} h(x^s, \omega^s) + \langle \lambda_{k-1}^s , x^s - \bar{x}_{k-1} \rangle + \frac{\rho}{2} ||x^s - \bar{x}_{k-1}||^2$, $\forall s $.

    $\bar{x}_k = \frac{1}{|S_k|} \sum_{s=1}^{|S_k|} x_{k}^s$.

    Calculate $g_k^s = x_k^s - \bar{x}_k$, $G_k^s= \{ d_{k-1}^s, g_k^s\}$ and $d_k^s = Nr(G_k^s)$, $\forall s $.

    Apply line-search algorithm to find a step size $\theta_k^s$ which belongs to $L \cap R$.

    Update $\hat{\lambda}_k^{s} = \lambda_{k-1}^{s} + \theta_k^s d_k^s$, $\forall s $.

    \eIf{\eqref{trust region rule} is satisfied}
    { $\lambda_k  \leftarrow \hat{\lambda}_k, \quad \delta_k \leftarrow \min \{\gamma\delta_{k-1},\delta_{max} \}$\;
    }{
            $\lambda_k  \leftarrow \lambda_{k-1}, \quad \delta_k \leftarrow \max\{\frac{\delta_{k-1}}{\gamma}, \delta_{min}\}$\;
        }

        $k \leftarrow k+1$
    }
\end{algorithm}

\section{Convergence Properties}

\subsection{Scenario Dual Objective Functions}
\begin{definition}
For any $\omega^s$, $\bar{x}$ and $\lambda^s$, define the scenario optimal solution 

\begin{equation*}
    x^s(\lambda^s, \omega^s) = \argmin_{x \in X} h(x,\omega^s) + \langle \lambda^s, x - \bar{x} \rangle,
\end{equation*}
and the corresponding scenario dual objective function 

\begin{equation*}
    L^s(\lambda^s ; \omega^s, \bar{x}) = h(x^s,\omega^s) + \langle \lambda^s, x^s - \bar{x} \rangle,
\end{equation*}

\end{definition}

\begin{lemma} \label{dual constraint}
    In any iteration of Algorithm \ref{SPHA}, we have $\sum_{s=1}^{|S_k|} \lambda_k^s = 0$.
\end{lemma}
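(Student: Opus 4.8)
The plan is to prove the identity by induction on the iteration counter $k$, exploiting the fact that the ``dual feasibility'' constraint $\sum_{s}\lambda^s=0$ simply says that the stacked multiplier vector $\lambda_k=(\lambda_k^1,\dots,\lambda_k^{|S_k|})$ lies in the subspace $\mathcal{N}:=\{(v^1,\dots,v^{|S_k|}): \sum_{s}v^s=0\}$, which is exactly the ``non‑anticipativity complement'' associated with the averaging in \eqref{x_k}. So it suffices to check that (i) the initialization puts $\lambda_0$ in $\mathcal{N}$, (ii) enlarging the scenario set does not destroy membership in $\mathcal{N}$, and (iii) every update executed inside the main loop of Algorithm \ref{SPHA} keeps the stacked multiplier inside $\mathcal{N}$.

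For (i), the initialization sets $\lambda_0=\vec 0\in\mathcal{N}$. For (ii), assume $\sum_{s=1}^{|S_{k-1}|}\lambda_{k-1}^s=0$; when the new scenarios $\omega^{|S_{k-1}|+1},\dots,\omega^{|S_k|}$ are appended, Algorithm \ref{SPHA} sets their multipliers to $\vec 0$, so the extended vector $(\lambda_{k-1}^1,\dots,\lambda_{k-1}^{|S_k|})$ still sums to $\vec 0$. For (iii), I would first record the elementary identity $\sum_{s=1}^{|S_k|}g_k^s=\sum_{s=1}^{|S_k|}(x_k^s-\bar x_k)=\sum_{s=1}^{|S_k|}x_k^s-|S_k|\bar x_k=0$, which is immediate from \eqref{x_k} and \eqref{g_k^s}; hence the stacked subgradient $g_k$ lies in $\mathcal{N}$. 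Since each $d_k^s$ in \eqref{d_k^s} is a convex combination of $g_k^s$ and $d_{k-1}^s$, a companion induction (carried jointly with the one on $\lambda_k$) gives $d_k\in\mathcal{N}$, provided the convex weight in \eqref{gamma_k^s} and, subsequently, the line‑search step length are shared across scenarios — equivalently, provided the min‑norm problem \eqref{gamma_k^s} and the strong‑Wolfe search are read as acting on the \emph{stacked} vectors, as the aggregate form of condition $R$ in \eqref{strong-wolfe condition} suggests. Finally I would split on the trust‑region test \eqref{trust region rule}: if iteration $k$ is unsuccessful then $\lambda_k=\lambda_{k-1}$ and the claim is inherited verbatim; if it is successful then $\lambda_k^s=\lambda_{k-1}^s+\theta_k^s d_k^s$, so $\sum_{s}\lambda_k^s=\sum_{s}\lambda_{k-1}^s+\sum_{s}\theta_k^s d_k^s=0$, the first sum vanishing by the induction hypothesis and the second because $d_k\in\mathcal{N}$ and the displacement $\theta_k d_k$ stays in the subspace $\mathcal{N}$.

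The main obstacle is exactly the successful‑iteration step, i.e.\ certifying $\sum_{s}\theta_k^s d_k^s=0$, and it is really a bookkeeping obstacle rather than an analytic one. Two points need care. First, one must fix that the conjugate combination coefficient in \eqref{gamma_k^s} and the line‑search step in \eqref{strong-wolfe condition} are common to all scenarios, so that $\lambda_{k-1}+\theta_k d_k$ is a genuine displacement inside $\mathcal{N}$; with strictly per‑scenario $\gamma_k^s$ and $\theta_k^s$ the stacked direction $d_k$ need not belong to $\mathcal{N}$ and the identity can fail, so the argument hinges on the intended (stacked) interpretation of the direction‑finding and line‑search subroutines. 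Second, one must verify that whenever $|S_k|$ grows, the newly created components $d_{k-1}^s$ of the search direction (for the freshly sampled $s$) are initialized consistently, so that the \emph{extended} $d_{k-1}$ still lies in $\mathcal{N}$; otherwise the propagation $d_{k-1}\in\mathcal{N}\Rightarrow d_k\in\mathcal{N}$ could break precisely at the iterations where scenarios are added. Once these two consistency issues are pinned down, what remains is the routine two‑line induction sketched above.
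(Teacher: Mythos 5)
Your induction skeleton (zero initialization, zero multipliers assigned to newly added scenarios, and the identity $\sum_s g_k^s = \sum_s (x_k^s - \bar x_k) = 0$ from \eqref{x_k} and \eqref{g_k^s}) is exactly the paper's, and your base case and scenario-extension step are fine. The divergence --- and the gap --- is in the successful-iteration step. You are right that with the update $\hat\lambda_k^s = \lambda_{k-1}^s + \theta_k^s d_k^s$, where the weight $\gamma_k^s$ in \eqref{gamma_k^s} and the step $\theta_k^s$ from the line search are genuinely scenario-dependent, the increment $\sum_s \theta_k^s d_k^s$ need not vanish even when $\sum_s d_{k-1}^s = 0$ and $\sum_s g_k^s = 0$: distinct convex weights $\gamma_k^s$ already destroy $\sum_s d_k^s = 0$, and distinct step lengths compound the problem. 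Your argument therefore only closes under the extra hypothesis that the direction-finding and line-search subroutines act on the stacked vectors with a single shared $\gamma_k$ and $\theta_k$, which is not what \eqref{gamma_k^s} and Algorithm \ref{SPHA} literally prescribe. As written, that leaves the lemma unproved for the stated algorithm, and you say as much yourself.

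For comparison, the paper closes the induction by a different device: it writes the multiplier increment in the classic PH form $\lambda_{n+1}^s = \lambda_n^s + \rho(x_{n+1}^s - \bar x_{n+1})$, so the increment sums to zero term by term via \eqref{x_k}, with no need for any shared coefficients, and the induction is a two-line telescoping. But that increment is not the line-search update of Algorithm \ref{SPHA} either, so the difficulty you isolate is real and is not actually resolved by the paper's own proof; your diagnosis of where the claim hinges is sharper than the paper's treatment. To make either version airtight one must pin down the dual update precisely: either adopt the classic increment $\rho(x_k^s - \bar x_k)$, for which the scenario sum vanishes automatically, or establish that the stacked increment $\theta_k \odot d_k$ lies in the zero-sum subspace $\mathcal{N}$, e.g.\ by sharing $\gamma_k$ and $\theta_k$ across scenarios or by projecting the dual update onto $\mathcal{N}$.
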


\begin{proof}{Proof}
    We will show the lemma by induction. When $k = 0$, $\lambda_0^s = 0$, for all $s \in S_0$. Thus, the statement is correct. Next, when $k = 1$, we have
    \begin{equation*}
        \lambda_1^s = \rho (x_1^s - \bar{x}_k) = \rho (x_1^s - \frac{1}{|S_1|}\sum_{s = 1}^{|S_1|} x_1^s), 
    \end{equation*}
    \noindent then 
    \begin{equation*}
        \sum_{s=1}^{|S_1|} \lambda_1^s = \rho \sum_{s=1}^{|S_1|}(x_1^s - \frac{1}{|S_1|}\sum_{s = 1}^{|S_1|} x_1^s) = 0.
    \end{equation*}

    \noindent Suppose it is true for $k = n$, i.e., $\sum_{s=1}^{|S_n|} \lambda_n^s = 0$, then for $k = n + 1$, 
    \begin{equation*}
    \sum_{s=1}^{|S_{n+1}|} \lambda_{n+1}^s = \sum_{s=1}^{|S_n|}\lambda_n^s + \sum_{s=|S_n|+1}^{|S_{n+1}|}\lambda_n^s + \rho \sum_{s=1}^{|S_{n+1}|}(x_{n+1}^s - \bar{x}_{n+1}) = 0,
    \end{equation*}
     where the first part is $0$ by the induction hypothesis. The second and third part is $0$ by the construction of Algorithm \ref{SPHA}. We thus conclude the proof. \Halmos
    
\end{proof}

\begin{theorem} \label{independent}
    In every iteration of Algorithm \ref{SPHA}, we have 
    \begin{equation*}
        \frac{1}{|S_k|} \sum_{s = 1}^{|S_k|} L_k^s(\lambda_k^s; \omega^s, \bar{x}_k)  = \frac{1}{|S_k|} \sum_{s = 1}^{|S_k|} h(x_k^s,\omega^s) + \langle \lambda_k^s, x_k^s \rangle.
    \end{equation*}
\end{theorem}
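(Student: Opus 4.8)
The plan is to unwind the definition of the scenario dual objective and then exploit the dual feasibility identity already established in Lemma~\ref{dual constraint}. Recalling that for the current iterate we have, by definition, $L_k^s(\lambda_k^s;\omega^s,\bar{x}_k) = h(x_k^s,\omega^s) + \langle \lambda_k^s, x_k^s - \bar{x}_k\rangle$, the first step is simply to substitute this expression inside the average on the left-hand side and split the inner product using bilinearity, writing $\langle \lambda_k^s, x_k^s - \bar{x}_k\rangle = \langle \lambda_k^s, x_k^s\rangle - \langle \lambda_k^s, \bar{x}_k\rangle$.

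Next I would observe that $\bar{x}_k$ does not depend on the scenario index $s$, so the troublesome terms collect as $\sum_{s=1}^{|S_k|}\langle \lambda_k^s, \bar{x}_k\rangle = \big\langle \sum_{s=1}^{|S_k|}\lambda_k^s,\ \bar{x}_k\big\rangle$. At this point Lemma~\ref{dual constraint} applies verbatim: since $\sum_{s=1}^{|S_k|}\lambda_k^s = 0$ in every iteration of Algorithm~\ref{SPHA}, this entire aggregated term vanishes. Dividing the remaining identity through by $|S_k|$ yields exactly the claimed equality.

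There is really no substantive obstacle here; the statement is a bookkeeping consequence of the zero-sum property of the dual multipliers. The only point requiring care is to make sure the multipliers being summed are the \emph{current} iterate multipliers $\lambda_k^s$ evaluated against the \emph{current} average $\bar{x}_k$ — which is precisely the configuration covered by Lemma~\ref{dual constraint} — and not, say, a mix of $\lambda_{k-1}^s$ and $\bar{x}_k$ that could arise from the intermediate quantities in the line-search. If one wished to be fully explicit, a single displayed chain of equalities (substitute the definition, split the inner product, factor $\bar{x}_k$ out of the sum, invoke $\sum_s \lambda_k^s = 0$, divide by $|S_k|$) suffices, and I would present it that way.
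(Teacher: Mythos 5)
Your proposal is correct and follows exactly the paper's own argument: expand the definition of $L_k^s$, split the inner product, and kill the aggregated term $\sum_{s}\langle \lambda_k^s,\bar{x}_k\rangle$ using Lemma~\ref{dual constraint}. Nothing further is needed.
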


\begin{proof}{Proof}
    \begin{equation*}
        \frac{1}{|S_k|} \sum_{s = 1}^{|S_k|} L_k^s(\lambda^s; \omega^s, \bar{x}_k) = \frac{1}{|S_k|} \sum_{s = 1}^{|S_k|} [h(x_k^s,\omega^s) + \langle \lambda_k^s, x_k^s \rangle] -  \frac{1}{|S_k|} \sum_{s = 1}^{|S_k|} \langle \lambda_k^s, \bar{x}_k \rangle,
    \end{equation*}
    \noindent where the last part is $0$ by Lemma \ref{dual constraint}. \Halmos
\end{proof}

\textbf{Remark:} Theorem \ref{independent} indicates that the dual objective function value is independent to $\bar{x}$. We can thus define the scenario dual objective function as 

\begin{equation*}
    L^s(\lambda^s ; \omega^s) := h(x^s,\omega^s) + \langle \lambda^s, x^s \rangle,
\end{equation*}
which will not affect the calculation of the dual objective function value.

\subsection{Stochastic Local Approximation}

\begin{assumption} \label{boundedness of primal variable}
    The primal feasible region $X$ is bounded, i.e., there exists a constant $C$ such that for any $x \in X$, we have $||x|| \leq C$.
\end{assumption}
    
\begin{assumption} \label{boundedness of dual objective}
    For any $\omega^s$ and $\lambda^s$, $L^s(\lambda^s; \omega^s)$ is bounded, i.e., there exists a constant $M$, such that $|L^s(\lambda^s; \omega^s)| \leq M$.
\end{assumption}

\begin{assumption} \label{Lipschitz continuity}
    For any $\omega^s$, $L^s(\lambda^s; \omega^s)$ possesses a Lipschitz constant $0 < L < \infty$ {\color{blue}(in $\lambda$)} and the sub-differentials are non-empty for every $\lambda^s$.
\end{assumption}

\begin{assumption} \label{Boundedness of subgradient}
    For any $\omega^s$ and $\lambda^s$, $L^s(\lambda^s; \omega^s)$ has bounded sub-differential set, i.e., for any $g^s \in \partial_{\lambda^s}L^s(\lambda^s; \omega^s)$, we have $||g^s|| \leq G$.
\end{assumption}

\begin{definition} \label{kappa approximation}  A multi-variable function $L_k(\lambda^1,...,\lambda^{|S|})$ is a local $\kappa$-approximation of $L(\lambda^1,...,\lambda^{|S|})$ on $\mathcal{B}(\lambda_k^s,\delta_k)$, if for all $\lambda^s \in \mathcal{B}(\lambda_k^s,\delta_k)$,
\begin{equation*}
|L(\lambda^1,...,\lambda^{|S|})-L_k(\lambda^1,...,\lambda^{|S|})| \leq \kappa \delta_k^2.
\end{equation*}
\end{definition}

\begin{definition} \label{stocahstic kappa approximation}Let $\Delta_k$, $\Lambda_k^s$ denote the random counterpart of $\delta_k$,  $\lambda_k^s$, respectively, and $\mathbb{L}_{k}$ denote the $\sigma$-algebra generated by $ \{\mathcal{L}_{i}\}_{i=0}^{k-1}$.  A sequence of random model $\{\mathcal{L}_{k}\}$ is said to be $\mu_1$-probabilistically $\kappa$-approximation of $\mathcal{L}$ with respect to $\{\mathcal{B}(\Lambda_k^s,\Delta_k)\}$ if the event 
\begin{equation*}
    I_k \overset{\Delta}{=}  \{ \mathcal{L}_{k} \ \textit{is} \ \kappa- \textit{approximation of}  \ \mathcal{L} \ \textit{on} \ \mathcal{B}(\Lambda_k^s,\Delta_k)\}
\end{equation*}

\noindent satisfies the condition 
\begin{equation*}
    \mathbb{P}(I_k|\mathcal{L}_{k-1}) \geq \mu_1 > 0.
\end{equation*}
\end{definition}

\begin{lemma} \label{boundedness lemma 1}
    If a scenario $ \omega^{s_0} \in S_k \backslash S_{k-1}$ added during iteration $k$ is replaced by $(\omega^{s_0})'$, then for any $s \neq s_0$, we have 
    \begin{equation*}
        ||g_k^s - (g_k^s)'|| = O(\frac{1}{|S_k|}), \  |\gamma_k^s - (\gamma_k^s)'| = O(\frac{1}{|S_k|}), \ \textit{and} \  ||d_k^s - (d_k^s)'|| = O(\frac{1}{|S_k|}),
    \end{equation*}
    where \( g_k^s \), \( \gamma_k^s \), and \( d_k^s \) are defined in equation \eqref{g_k^s}-\eqref{d_k^s}. \( (g_k^s)' \), \( (\gamma_k^s)' \), and \( (d_k^s)' \) are the iterates obtained through Algorithm 1 when the newly added sample \( \omega^{s_0} \) is replaced by \( (\omega^{s_0})' \).
\end{lemma}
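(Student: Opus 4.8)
The plan is to walk through one iteration of Algorithm \ref{SPHA} and isolate, at each step, the dependence on the swapped scenario $\omega^{s_0}$. The key structural fact is that $\bar{x}_{k-1}$ and each multiplier $\lambda_{k-1}^s$ with $s \neq s_0$ — as well as each stored direction $d_{k-1}^s$ with $s \neq s_0$ — are determined either during iterations $0,\dots,k-1$, whose data never involve $\omega^{s_0}$, or (when $s \in S_k \setminus S_{k-1}$) are freshly reset to $\vec{0}$; none of them feels the swap. Consequently the subproblem \eqref{x_k^s} defining $x_k^s$ has exactly the same data before and after the swap for every $s \neq s_0$, so $x_k^s = (x_k^s)'$ for all $s \neq s_0$, and only $x_k^{s_0}$ changes.

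From here the first assertion is immediate. By \eqref{x_k}, $\bar{x}_k - (\bar{x}_k)' = \frac{1}{|S_k|}\bigl(x_k^{s_0} - (x_k^{s_0})'\bigr)$, and Assumption \ref{boundedness of primal variable} bounds both $x_k^{s_0}$ and $(x_k^{s_0})'$ in norm by $C$, hence $\|\bar{x}_k - (\bar{x}_k)'\| \le 2C/|S_k| = O(1/|S_k|)$. Since $g_k^s = x_k^s - \bar{x}_k$ with $x_k^s$ unchanged for $s \neq s_0$, we get $g_k^s - (g_k^s)' = -\bigl(\bar{x}_k - (\bar{x}_k)'\bigr)$, so $\|g_k^s - (g_k^s)'\| = O(1/|S_k|)$.

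For $\gamma_k^s$ I would use sensitivity analysis of the one-dimensional QP \eqref{gamma_k^s}. Writing $c_k^s := d_{k-1}^s - g_k^s$, its objective $\phi(\gamma) = \frac{1}{2}\|g_k^s + \gamma c_k^s\|^2$ is a convex quadratic with second derivative $\|c_k^s\|^2$, whose minimizer over $[0,1]$ is $\gamma_k^s = \Pi_{[0,1]}\!\bigl(-\langle g_k^s, c_k^s\rangle / \|c_k^s\|^2\bigr)$; only $g_k^s$ is perturbed, by an amount of order $1/|S_k|$ (first assertion), while $d_{k-1}^s$ stays fixed, so the numerator and the denominator of the unconstrained minimizer each move by $O(1/|S_k|)$, and since $\Pi_{[0,1]}$ is nonexpansive, $|\gamma_k^s - (\gamma_k^s)'| = O(1/|S_k|)$. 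Finally, from \eqref{d_k^s} and $(d_{k-1}^s)' = d_{k-1}^s$, a short rearrangement gives $d_k^s - (d_k^s)' = \bigl(\gamma_k^s - (\gamma_k^s)'\bigr)\bigl(d_{k-1}^s - (g_k^s)'\bigr) + (1 - \gamma_k^s)\bigl(g_k^s - (g_k^s)'\bigr)$; using $\|g_k^s\| \le 2C$ (Assumption \ref{boundedness of primal variable}) and the bound $\|d_{k-1}^s\| \le \max\{2C, \|d_0^s\|\}$ that follows inductively from $d_{k-1}^s$ being a minimum-norm convex combination, together with the two estimates just established, both terms are $O(1/|S_k|)$, so $\|d_k^s - (d_k^s)'\| = O(1/|S_k|)$.

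The delicate point — and the only place where the argument is not pure bookkeeping — is the $\gamma_k^s$ estimate: the clean derivation above tacitly assumes $\|c_k^s\| = \|d_{k-1}^s - g_k^s\|$ is bounded away from $0$ uniformly in $k$, since otherwise the factor $1/\|c_k^s\|^2$ amplifies the perturbation and a generic Hausdorff-distance estimate for the projection onto the moving segment $[\,g_k^s, d_{k-1}^s\,]$ yields only the weaker rate $O(1/\sqrt{|S_k|})$. I would therefore either invoke a mild non-degeneracy condition guaranteeing such a lower bound (ruling out the stored direction essentially coinciding with the current subgradient), or treat the near-degenerate regime separately, observing that when $\|c_k^s\|$ is small $d_k^s$ lies within $O(\|c_k^s\|)$ of $d_{k-1}^s$ and likewise for the primed iterate, which still pins down $\|d_k^s - (d_k^s)'\|$ even though the representation $d_k^s = \gamma_k^s d_{k-1}^s + (1-\gamma_k^s) g_k^s$ is then non-unique. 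Everything else rests on the single observation that the swap perturbs exactly one of the $|S_k|$ primal subproblems, so its footprint on every other quantity is diluted by the factor $1/|S_k|$.
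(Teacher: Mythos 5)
Your proposal is correct and follows essentially the same route as the paper: only the $s_0$-th primal subproblem feels the swap, so $\bar{x}_k$ (and hence each $g_k^s$, $s\neq s_0$) moves by at most $2C/|S_k|$, after which the $\gamma_k^s$ bound follows from sensitivity of the one-dimensional QP and the $d_k^s$ bound from expanding the convex combination. The degeneracy issue you flag — the factor $1/\|d_{k-1}^s-g_k^s\|^2$ when the stored direction nearly coincides with the current subgradient — is in fact present but unaddressed in the paper's own computation of $\gamma_k^s-(\gamma_k^s)'$, so your explicit handling of it (nonexpansiveness of $\Pi_{[0,1]}$ plus a separate near-degenerate case) is a strictly more careful version of the same argument.
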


\begin{proof}{Proof}
    First, note that  for $s \neq s_0$,
    \begin{equation*}
        \begin{aligned}
            & g_k^s = x_k^s - \bar{x}_k = x_k^s - \frac{x_k^1 + ... + x_k^s + ... + x_k^{|S_k|}} {|S_k|}, \\
             & (g_k^{s})' = x_k^s - \bar{x}_k' = x_k^s - \frac{x_k^1 + ... + (x_k^{s})' + ... + x_k^{|S_k|}} {|S_k|}.
        \end{aligned} 
    \end{equation*}
    \noindent Thus, by Assumption \ref{boundedness of primal variable}, 
    \begin{equation} \label{boundedness of g}
        ||g_k^s - (g_k^{s})'|| = \frac{||x_k^s-(x_k^s)'||}{|S_k|} \leq \frac{2C}{|S_k|}.
    \end{equation}

    \noindent Also, note that 

    \begin{equation*}
        \begin{aligned}
            & \gamma_k^s = \argmin_{\gamma^s \in [0,1]} ||\gamma^s  d_{k-1}^s + (1-\gamma^s) g_k^s||^2 = \frac{\langle d_{k-1}^s, d_{k-1}^s - g_k^s \rangle}{||d_{k-1}^s - g_k^s||^2}, \\
            & (\gamma_k^s)' = \argmin_{\gamma^s \in [0,1]} ||\gamma^s  d_{k-1}^s + (1-\gamma^s) (g_k^s)'||^2 = \frac{\langle d_{k-1}^s, d_{k-1}^s - (g_k^s)' \rangle}{||d_{k-1}^s - (g_k^s)'||^2}.     
        \end{aligned}   
    \end{equation*}
    
    \noindent Let $\Delta g_k^s =(g_k^s)' - g_k^s$, we have 

    \begin{equation*}
        \begin{aligned}
            \gamma_k^s - (\gamma_k^s)' & = \frac{\langle d_{k-1}^s, d_{k-1}^s - g_k^s \rangle \cdot ||d_{k-1}^s - g_k^s - \Delta g_k^s||^2}{||d_{k-1}^s - g_k^s||^2 \cdot ||d_{k-1}^s - g_k^s - \Delta g_k^s||^2} \\
            & - \frac{\langle d_{k-1}^s, d_{k-1}^s - g_k^s - \Delta g_k^s \rangle \cdot ||d_{k-1}^s - g_k^s||^2}{||d_{k-1}^s - g_k^s||^2 \cdot ||d_{k-1}^s - g_k^s - \Delta g_k^s||^2} \\
            & = \frac{\langle \Delta g_k^s, d_{k-1}^s - g_k^s  \rangle \cdot \langle d_{k-1}^s, d_{k-1}^s - g_k^s \rangle} {||d_{k-1}^s - g_k^s||^2 \cdot ||d_{k-1}^s - g_k^s - \Delta g_k^s||^2} \\
            & + \frac{||\Delta g_k^s||^2 \cdot \langle d_{k-1}^s, d_{k-1}^s - g_k^s \rangle - \langle \Delta g_k^s, d_{k-1}^s \rangle \cdot ||d_{k-1}^s - g_k^s||^2}{||d_{k-1}^s - g_k^s||^2 \cdot ||d_{k-1}^s - g_k^s - \Delta g_k^s||^2}.
        \end{aligned}
    \end{equation*}

    \noindent With Assumption \ref{Boundedness of subgradient}, Eq. \eqref{boundedness of g} and the triangle inequality, we have 

    \begin{equation} \label{boundedness of gamma}
        |\gamma_k^s - (\gamma_k^s)'| = O(\frac{1}{|S_k|})
    \end{equation}

    \noindent Finally, to bound $||d_k^s - (d_k^s)'||$, note that

    \begin{equation}
        \begin{aligned}
            & d_k^s = \gamma_k^s g_k^s + (1-\gamma_k^s) d_{k-1}^s, \\
            & (d_k^s)' = (\gamma_k^s)' (g_k^s)' + (1-(\gamma_k^s)') d_{k-1}^s.
        \end{aligned}
    \end{equation}

    \noindent Let $\Delta \gamma_k^s = (\gamma_k^s)' - \gamma_k^s$, then

    \begin{equation}
        \begin{aligned}
            d_k^s - (d_k^s)' & = \gamma_k^s g_k^s + (1-\gamma_k^s) d_{k-1}^s \\ 
            & - (\gamma_k^s + \Delta \gamma_k^s)(g_k^s + \Delta g_k^s) - (1 - \gamma_k^s - \Delta \gamma_k^s) d_{k-1}^s \\
            & = -\gamma_k^s \Delta g_k^s - \Delta \gamma_k^s g_k^s - \Delta \gamma_k^s \Delta g_k^s + \Delta \gamma_k^s d_{k-1}^s. 
        \end{aligned}
    \end{equation}
\noindent With Assumption \ref{Boundedness of subgradient}, Eq. \eqref{boundedness of g} and Eq. \eqref{boundedness of gamma}, we have

\begin{equation}
    ||d_k^s - (d_k^s)'|| = O(\frac{1}{|S_k|}). \Halmos
\end{equation} 
\end{proof}

    For any $\theta^s$, we define

    \begin{equation*}
        \begin{aligned}
            & x_k^s(\theta^s) := \argmin_{x^s \in X} h(x^s, \omega^s) + \langle \lambda_k^s + \theta^s d_k^s, x^s - \bar{x}_k \rangle. \\
            & (x_k^s)'(\theta^s) := \argmin_{x^s \in X} h(x^s, \omega^s) + \langle \lambda_k^s + \theta^s (d_k^s+\Delta d_k^s), x^s - (\bar{x}_k)'\rangle.
        \end{aligned}
    \end{equation*}
    
\begin{lemma} \label{boundedness of L_k^s}

    If a scenario $ \omega^{s_0} \in S_k \backslash S_{k-1}$ added during iteration $k$ is replaced by $(\omega^{s_0})'$, then for any $s \neq s_0$, we have 
    \begin{equation*}
        \begin{aligned}
            & ||x_k^s (\theta^s) - (x_k^s)'(\theta^s)|| = O(\frac{1}{|S_k|}), \ \textit{and} \\
            & |L_k^s (\lambda_k^s + \theta_k^s d_k^s; \omega^s) - L_k^s(\lambda_k^s + (\theta_k^s)' (d_k^s)'; \omega^s)| = O(\frac{1}{|S_k|}).
        \end{aligned}     
    \end{equation*}
\end{lemma}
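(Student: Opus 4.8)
The plan is to build directly on Lemma \ref{boundedness lemma 1} and reduce everything to propagating two $O(1/|S_k|)$ perturbations. When $\omega^{s_0}\in S_k\setminus S_{k-1}$ is replaced by $(\omega^{s_0})'$, the only data entering the definitions of $x_k^s(\theta^s)$ and $L_k^s(\cdot\,;\omega^s)$ that change for $s\neq s_0$ are: (i) the direction $d_k^s$, which moves by $\Delta d_k^s$ with $\|\Delta d_k^s\|=O(1/|S_k|)$ by Lemma \ref{boundedness lemma 1}; and (ii) the average iterate $\bar x_k$ (hence the $\hat x_k$ appearing in \eqref{line-search}), which moves by $\tfrac1{|S_k|}\bigl(x_k^{s_0}-(x_k^{s_0})'\bigr)$, of norm at most $2C/|S_k|$ by Assumption \ref{boundedness of primal variable}. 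The scenario cost $h(\cdot,\omega^s)$, the multiplier $\lambda_k^s$, and the proximal center (which predates iteration $k$) are untouched. I will also use throughout that the line-search step sizes range over the bounded searching region, so $\theta^s\in[0,\delta_{max}]$.

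For the first claim, recall from \eqref{x_k^s} that $x_k^s(\theta^s)$ is the minimizer over the convex set $X$ of a $\rho$-strongly convex objective. Writing $\phi_\theta$ and $\phi'_\theta$ for the original and perturbed objectives, their difference is an affine function of $x^s$ whose gradient is $\theta^s\Delta d_k^s$ (plus a $\rho(\bar x_k-(\bar x_k)')$ term should the perturbation reach a quadratic anchor), hence of norm $O(1/|S_k|)$ uniformly in $\theta^s\in[0,\delta_{max}]$. The standard perturbation bound for strongly convex minimization over $X$ — obtained by adding the two first-order optimality inequalities — then yields $\rho\,\|x_k^s(\theta^s)-(x_k^s)'(\theta^s)\|\le\|\theta^s\Delta d_k^s\|+O(1/|S_k|)=O(1/|S_k|)$, uniformly in $\theta^s$. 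Averaging over $s$ (the $s_0$-term contributes $\le 2C/|S_k|$, the remaining terms contribute $O(1/|S_k|)$ each) gives $\|\hat x_k-(\hat x_k)'\|=O(1/|S_k|)$ as well.

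For the second claim, I would split, by the triangle inequality, the target quantity into $\mathrm{(A)}+\mathrm{(B)}$ where
\begin{align*}
\mathrm{(A)} &:= \sup_{\theta\in[0,\delta_{max}]}\bigl|L_k^s(\lambda_k^s+\theta d_k^s;\omega^s)-L_k^s(\lambda_k^s+\theta (d_k^s)';\omega^s)\bigr|,\\
\mathrm{(B)} &:= \bigl|L_k^s(\lambda_k^s+\theta_k^s (d_k^s)';\omega^s)-L_k^s(\lambda_k^s+(\theta_k^s)'(d_k^s)';\omega^s)\bigr|.
\end{align*}
Term $\mathrm{(A)}$ is $O(1/|S_k|)$: substituting the first-claim bound and $\|\hat x_k-(\hat x_k)'\|=O(1/|S_k|)$ into the three summands of \eqref{line-search}, and using that all iterates lie in the bounded set $X$ (Assumption \ref{boundedness of primal variable}), that $h(\cdot,\omega^s)$ is Lipschitz on $X$ (Assumption \ref{Lipschitz continuity}), and that subgradients are bounded (Assumption \ref{Boundedness of subgradient}), each summand changes by $O(1/|S_k|)$, uniformly in $\theta$. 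Term $\mathrm{(B)}$ is bounded via Lipschitz continuity of $L_k^s$ in $\lambda$ (Assumption \ref{Lipschitz continuity}) by $L\,\|d_k^s\|\,|\theta_k^s-(\theta_k^s)'|$, so the proof reduces to showing $|\theta_k^s-(\theta_k^s)'|=O(1/|S_k|)$: the inequalities defining the strong-Wolfe sets $L$ and $R$ in \eqref{strong-wolfe condition} are built from exactly the quantities just shown to be $O(1/|S_k|)$-stable — the values $L_k^s(\lambda_k^s+\theta d_k^s)$, the directional derivatives $\langle g^s(\theta),d_k^s\rangle$, and $\|d_k^s\|^2$ — so the bracket produced by the (constructive) line search, and hence the returned step size, moves by $O(1/|S_k|)$; equivalently, since $\theta\mapsto L_k^s(\lambda_k^s+\theta d_k^s;\omega^s)$ is concave along the ray, the strong-Wolfe interval depends continuously on the data and shrinks by $O(1/|S_k|)$ under the perturbation. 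Combining $\mathrm{(A)}$ and $\mathrm{(B)}$ gives the claim.

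The main obstacle is term $\mathrm{(B)}$, namely establishing that the step size output by the strong-Wolfe line search is itself stable under an $O(1/|S_k|)$ perturbation of the one-dimensional objective and its derivative; the first claim and term $\mathrm{(A)}$ are routine strong-convexity and Lipschitz perturbation estimates resting on Lemma \ref{boundedness lemma 1} and Assumptions \ref{boundedness of primal variable}--\ref{Boundedness of subgradient}. I would resolve the obstacle by tracking the $O(1/|S_k|)$ perturbation of the defining inequalities of $L$ and $R$ through the explicit bracketing/bisection rule of the line search (whose comparisons are strict once $|S_k|$ is large), or, alternatively, through the concavity-of-the-ray-restriction argument above, either of which pins down $|\theta_k^s-(\theta_k^s)'|=O(1/|S_k|)$ and closes the proof.
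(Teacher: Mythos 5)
Your treatment of the first claim and of term $\mathrm{(A)}$ is sound (indeed, your strong-convexity perturbation bound for the $\argmin$ is more carefully justified than the paper's one-line appeal to ``sensitivity analysis in linear programming''), but term $\mathrm{(B)}$ contains a genuine gap that your sketch does not close. You reduce the second claim to showing $|\theta_k^s-(\theta_k^s)'|=O(1/|S_k|)$, i.e.\ stability of the \emph{output} of the inexact strong-Wolfe line search under an $O(1/|S_k|)$ perturbation of the one-dimensional objective and its subgradient. This is not implied by the $O(1/|S_k|)$ stability of the defining inequalities of $L$ and $R$: the admissible set $L\cap R$ is only guaranteed to be nonempty, it can have width $O(1)$, and the line search may return \emph{any} point of it, so the two runs of the procedure on nearby data can legitimately return step sizes differing by $O(1)$. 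Your fallback concavity argument does not rescue this either, since an $O(1/|S_k|)$ perturbation of a concave function can move its (near-)maximizer by $O(1)$ when the function is flat near the top. Without a Lipschitz selection rule for the step size — which the algorithm does not specify — the bound $L\,\|d_k^s\|\,|\theta_k^s-(\theta_k^s)'|=O(1/|S_k|)$ is unavailable.

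The paper avoids this issue entirely by never comparing the step sizes. It defines $g(\theta^s;\omega^s)$ and $g'(\theta^s;\omega^s)$ as the dual values along the original and perturbed rays, proves the uniform pointwise bound $|g(\theta^s;\omega^s)-g'(\theta^s;\omega^s)|=O(1/|S_k|)$ for all $\theta^s$ (essentially your term $\mathrm{(A)}$ computation), and then concludes via
$|\max_{\theta^s} g-\max_{\theta^s} g'|\le\max_{\theta^s}|g-g'|=O(1/|S_k|)$,
identifying $L_k^s(\lambda_k^s+\theta_k^s d_k^s;\omega^s)$ with $\max_{\theta^s}g(\theta^s;\omega^s)$. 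In other words, it compares \emph{optimal values} rather than \emph{optimizers}, which is exactly the device that makes the estimate insensitive to where in $L\cap R$ the line search lands. To repair your proof, replace the $\mathrm{(A)}+\mathrm{(B)}$ decomposition with this value-function comparison (or else add and justify a deterministic, Lipschitz step-size selection rule, which is a substantially stronger assumption than the paper makes).
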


\begin{proof}{Proof}

    \noindent Note that we have only changed the objective coefficient of $x^s$ and the difference is $ \theta^s \cdot \Delta d_k^s$. According to the sensitivity analysis in linear programming, $||\Delta d_k^s|| = O(\frac{1}{|S_k|})$ and $\theta^s \in [\delta_{min},\delta_{max}]$, we have

    \begin{equation} \label{x_k^s boundedness}
        ||x_k^s(\theta^s) - (x_k^s)'(\theta^s)|| = O(\theta^s \cdot ||\Delta d_k^s||) = O(\frac{1}{|S_k|}). 
    \end{equation}

    \noindent Also, for any $\theta^s$, consider

    \begin{equation*}
        \begin{aligned}
            & g(\theta^s; \omega^s) :=  h(x_k^s(\theta^s), \omega^s) + \langle \lambda_k^s + \theta^s d_k^s, x_k^s(\theta^s) - \bar{x}_k \rangle. \\
            & g'(\theta^s; \omega^s) :=  h((x_k^s)'(\theta^s), \omega^s) + \langle \lambda_k^s + \theta^s (d_k^s)', (x_k^s)'(\theta^s) - \bar{x}_k' \rangle. 
        \end{aligned}
    \end{equation*}

    \noindent Let $\Delta x_k^s (\theta^s) = x_k^s(\theta^s) - (x_k^s)'(\theta^s)$ and $\Delta \bar{x}_k = \bar{x}_k'-\bar{x}_k$, then we have
    \begin{equation*}
        \begin{aligned}
            g(\theta^s; \omega^s) - g'(\theta^s; \omega^s) & = h(x_k^s(\theta^s), \omega^s) + \langle \lambda_k^s + \theta^s d_k^s, x_k^s(\theta^s) - \bar{x}_k \rangle \\
            & -(h((x_k^s)'(\theta^s), \omega^s) + \langle \lambda_k^s + \theta^s (d_k^s)', (x_k^s)'(\theta^s) - \bar{x}_k' \rangle)\\
            & = h(x_k^s(\theta^s), \omega^s) - h(x_k^s(\theta^s) + \Delta x_k^s (\theta^s); \omega^s) \\
            & + \langle \lambda_k^s + \theta^s d_k^s, x_k^s(\theta^s) - \bar{x}_k \rangle \\
            & - \langle \lambda_k^s + \theta^s (d_k^s + \Delta d_k^s), x_k^s(\theta^s) + \Delta x_k^s(\theta^s)  - \bar{x}_k + \Delta \bar{x}_k \rangle) \\
            & = h(x_k^s(\theta^s), \omega^s) - h(x_k^s(\theta^s) + \Delta x_k^s (\theta^s), \omega^s) \\
            & - \langle \lambda_k^s, \Delta x_k^s(\theta^s) \rangle + \langle \lambda_k^s, \Delta \bar{x}_k \rangle - \langle \theta^s d_k^s, \Delta x_k^s(\theta^s) \rangle + \langle \theta^s d_k^s,  \Delta \bar{x}_k \rangle \\
            & - \langle \theta^s \Delta d_k^s, x_k^s(\theta^s) + \Delta x_k^s(\theta^s)  - \bar{x}_k + \Delta \bar{x}_k \rangle  
        \end{aligned}      
    \end{equation*}
    \noindent With Assumption \ref{Lipschitz continuity}, Lemma \ref{boundedness lemma 1} and Eq. \eqref{x_k^s boundedness}, we have
    \begin{equation} \label{boundedness of g function}
        || g(\theta^s; \omega^s) - g'(\theta^s; \omega^s)|| = O(\frac{1}{|S_k|}).
    \end{equation}
    \noindent Since Eq. \eqref{boundedness of g function} is true for any $\theta^s$, we have
    \begin{equation*}
        \begin{aligned}
            & \ | L_k^s (\lambda_k^s + \theta_k^s d_k^s; \omega^s) - L_k^s(\lambda_k^s + (\theta_k^s)' (d_k^s)'; \omega^s)|\\
            = & \ |\max_{\theta^s} g(\theta^s; \omega^s) - \max_{\theta^s} g'(\theta^s; \omega^s)| \\
            \leq & \ \max_{\theta^s} | g(\theta^s; \omega^s) - g'(\theta^s; \omega^s)|\\
            = & \ O(\frac{1}{|S_k|}). \Halmos
        \end{aligned}   
    \end{equation*}
\end{proof}

\begin{theorem} \label{concentration inequality}
    Let $L (\lambda^1,...,\lambda^{|S_k|}) := E_{\{\omega^1,...,\omega^{|S_k|}\}}  [\frac{1}{|S_k|} \sum_{s = 1}^{|S_k|} L_k^s(\lambda^s; \omega^s)]$
    and \\
    $L_k (\lambda^1,...,\lambda^{|S_k|}) := \frac{1}{|S_k|} \sum_{s = 1}^{|S_k|} L_k^s(\lambda^s; \omega^s)$. Let $M_1$ be a large constant, for any $0<\varepsilon<1$, $\kappa>\frac{4L}{\delta_{min}}$, and
        
    \begin{equation} \label{sample complexity}
         |S_k| \geq -8\log(\varepsilon/2)\cdot \frac{M_1^2}{\kappa^2\delta_k^4},
    \end{equation}
    
    \noindent we have
    \begin{equation*}
        \mathbb{P}[|L (\lambda^1,...,\lambda^{|S_k|}) - L_k (\lambda^1,...,\lambda^{|S_k|})| \leq \kappa \delta_k^2, \ \forall \lambda^s \in \mathcal{B}(\lambda_k^s,\delta_k) | \mathcal{L}_{k-1}] \geq 1-\varepsilon. 
    \end{equation*}
\end{theorem}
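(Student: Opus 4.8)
The plan is to establish the uniform-in-$\lambda$ estimate by reducing it to a pointwise estimate at the ball centers $\lambda_k^s$ and paying for the reduction with the Lipschitz constant. Set $\ell(\mu):=E_{\tilde\omega}[L^s(\mu;\tilde\omega)]$, which by the i.i.d.\ assumption is independent of $s$, so that $L(\lambda^1,\dots,\lambda^{|S_k|})=\frac{1}{|S_k|}\sum_{s=1}^{|S_k|}\ell(\lambda^s)$. For any $\lambda^s\in\mathcal B(\lambda_k^s,\delta_k)$, Assumption~\ref{Lipschitz continuity} gives $|\ell(\lambda^s)-\ell(\lambda_k^s)|\le L\delta_k$ and $|L^s(\lambda^s;\omega^s)-L^s(\lambda_k^s;\omega^s)|\le L\delta_k$, hence
\begin{equation*}
\bigl|L(\lambda)-L_k(\lambda)\bigr|\le\bigl|L(\lambda_k)-L_k(\lambda_k)\bigr|+2L\delta_k
\end{equation*}
simultaneously for every $\lambda^s$ in the ball. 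Because $\delta_k\ge\delta_{min}$ and $\kappa>4L/\delta_{min}$, we have $2L\delta_k\le\frac12\kappa\delta_k^2$, so it suffices to prove that, conditionally on $\mathcal L_{k-1}$, the event $\{\,|L(\lambda_k)-L_k(\lambda_k)|\le\frac12\kappa\delta_k^2\,\}$ has probability at least $1-\varepsilon$; on that event the display yields $|L(\lambda)-L_k(\lambda)|\le\kappa\delta_k^2$ uniformly over $\mathcal B(\lambda_k^s,\delta_k)$, which is the asserted conclusion.

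For the pointwise bound, condition on $\mathcal L_{k-1}$ (which fixes the centers $\lambda_k^s$ and the radius $\delta_k$) and write
\begin{equation*}
L(\lambda_k)-L_k(\lambda_k)=\frac{1}{|S_k|}\sum_{s=1}^{|S_k|}\bigl(\ell(\lambda_k^s)-L^s(\lambda_k^s;\omega^s)\bigr),
\end{equation*}
an average of conditionally mean-zero terms each bounded in absolute value by $M$ (Assumption~\ref{boundedness of dual objective}); equivalently, by Lemmas~\ref{boundedness lemma 1}--\ref{boundedness of L_k^s} replacing one freshly sampled scenario perturbs $L_k(\lambda_k)$ by $O(1/|S_k|)$. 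Either Hoeffding's inequality on the sampled coordinates or the bounded-differences (McDiarmid) inequality then gives, with $M_1$ chosen to dominate $M$ together with the $O(1/|S_k|)$ constants,
\begin{equation*}
\mathbb P\bigl[\,|L(\lambda_k)-L_k(\lambda_k)|>t\,\big|\,\mathcal L_{k-1}\bigr]\le 2\exp\!\Bigl(-\frac{|S_k|\,t^2}{2M_1^2}\Bigr).
\end{equation*}
Choosing $t=\frac12\kappa\delta_k^2$, the right-hand side equals $2\exp\bigl(-|S_k|\kappa^2\delta_k^4/(8M_1^2)\bigr)$, which is at most $\varepsilon$ precisely when $|S_k|\ge-8\log(\varepsilon/2)\,M_1^2/(\kappa^2\delta_k^4)$, i.e.\ under \eqref{sample complexity}; combined with the first paragraph this proves the theorem.

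I expect the main obstacle to be the uniformity over the continuum $\{\lambda^s\in\mathcal B(\lambda_k^s,\delta_k)\}$: a union bound over a net would multiply the required sample size by the covering number of the ball and destroy the clean $\delta_k^{-4}$ scaling. The device that avoids this is precisely the Lipschitz-plus-large-$\kappa$ trade-off above, so the real content is verifying that the slack $2L\delta_k$ is absorbed into half the target tolerance $\kappa\delta_k^2$ — which is exactly what the hypothesis $\kappa>4L/\delta_{min}$ encodes. A secondary point requiring care is that the centers $\lambda_k^s$ and the radius $\delta_k$ are themselves random and that $S_k$ reuses the scenarios of $S_{k-1}$; one handles this by conditioning on $\mathcal L_{k-1}$ — rendering the centers, radius, and reused scenarios measurable and identifying $L$ with the conditional mean of $L_k$ — and by invoking Lemmas~\ref{boundedness lemma 1}--\ref{boundedness of L_k^s} to keep the bounded-difference constants at $O(1/|S_k|)$ uniformly in $\lambda$ over the ball, so that they can be folded into $M_1$.
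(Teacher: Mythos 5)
Your proposal is correct and follows essentially the same route as the paper: a bounded-differences (McDiarmid) argument at the centers $\lambda_k^s$, with the replaced scenario contributing $2M/|S_k|$ and the remaining coordinates $O(1/|S_k|)$ via Lemmas \ref{boundedness lemma 1}--\ref{boundedness of L_k^s}, followed by the Lipschitz extension over the ball with the slack $2L\delta_k$ absorbed by $\kappa>4L/\delta_{min}$. The one caution is that your Hoeffding alternative would need the summands $\ell(\lambda_k^s)-L^s(\lambda_k^s;\omega^s)$ to be independent and mean-zero, which fails because $\lambda_k^s$ depends on all sampled scenarios; the bounded-differences version you also offer (and which the paper uses) is the one that actually goes through.
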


\begin{proof}{Proof}
    For the points $\lambda_k^s$, we will first show that given $\mathcal{L}_{k-1}$, $L_k$ has the bounded difference property and then use McDiarmid's inequality to prove the theorem. Since $\lambda_k^s$ depends on $\omega^1,\omega^2,...,\omega^{|S_k|}$, the McDiarmid's inequality cannot be applied to $L_k(\lambda_k^1,...,\lambda_k^{|S_k|})$ directly. However, we can express $L_k$ as a function of $\omega^1,\omega^2,...,\omega^{|S_k|}$ and show that $L_k(\omega^1,\omega^2,...,\omega^{|S_k|})$ has the bounded difference property. Specifically, the bounded difference property with respect to $L_k(\omega^1,\omega^2,...,\omega^{|S_k|})$ is written as

\begin{equation*}
    |L_k(\omega^1,\omega^2,...,\omega^{s_0},...,\omega^{|S_k|}) - L_k(\omega^1,\omega^2,...,(\omega^{s_0})',...,\omega^{|S_k|})| = O(\frac{1}{|S_k|}).
\end{equation*}

\noindent We will divide the analysis into 2 cases to bound the changes of $L_k^s(\lambda_k^s;\omega^s)$.

\begin{itemize}
    \item Case 1: For $s \neq s_0$. Given $\mathcal{L}_{k-1}$, based on Lemma \ref{boundedness of L_k^s}, we have
    \begin{equation*}
    |L_k^s(\lambda_k^s;\omega^s) - L_k^s((\lambda_k^s)';\omega^s)| = O(\frac{1}{|S_k|}).
    \end{equation*}
    \item Case 2: For $s = s_0$. Given $\mathcal{L}_{k-1}$, based on Assumption \ref{boundedness of dual objective}, we have
    \begin{equation*}
    |L_k^s(\lambda_k^s;\omega^s) - L_k^s((\lambda_k^s)';(\omega^{s})')|\leq 2M.
    \end{equation*}
\end{itemize}

\noindent Thus,
    \begin{equation*}
        \begin{aligned}
            & |L_k(\omega^1,\omega^2,...,\omega^{s_0},...,\omega^{|S_k|}) - L_k(\omega^1,\omega^2,...,(\omega^{s_0})',...,\omega^{|S_k|})| \\
            \leq & \ \frac{\sum_{s \neq s_0}|L_k^s(\lambda_k^s;\omega^s) - L_k^s((\lambda_k^s)';\omega^s)|}{|S_k|} + \frac{|L_k^{s_0}(\lambda_k^{s_0};\omega^{s_0}) - L_k^{s_0}((\lambda_k^{s_0})';(\omega^{s_0})')|}{|S_k|} \\
            =  & \ O(\frac{1}{|S_k|}).
        \end{aligned}
    \end{equation*}
\noindent Let the bounded difference be specificed by $\frac{M_1}{|S_k|}$. Applying McDiarmid's inequality,
\begin{equation*}
        \quad \ \mathbb{P}(|L (\lambda_k^1,...,\lambda_k^{|S_k|}) - L_k (\lambda_k^1,...,\lambda_k^{|S_k|})| \leq \frac{1}{2}\kappa \delta_k^2)
        \geq 1-2\exp(-\frac{\kappa^2\delta_k^4 |S_k|^2}{8 \cdot |S_k| \cdot M_1^2}) \geq 1-\varepsilon,
\end{equation*}

\noindent which indicates that when 
\begin{displaymath}
   |S_k| \geq -8\log(\varepsilon/2)\cdot \frac{M_1^2}{\kappa^2\delta_k^4}, 
\end{displaymath} 
we have 
\begin{equation*}
    \mathbb{P}(|L (\lambda_k^1,...,\lambda_k^{|S_k|}) - L_k (\lambda_k^1,...,\lambda_k^{|S_k|})| \leq \frac{1}{2}\kappa \delta_k^2) \geq 1-\varepsilon.
\end{equation*}

\noindent For any other $\lambda^s \in \mathcal{B}(\lambda_k^s,\delta_k)$, if $|L (\lambda_k^1,...,\lambda_k^{|S_k|}) - L_k (\lambda_k^1,...,\lambda_k^{|S_k|})| \leq \frac{1}{2}\kappa \delta_k^2$, then

\begin{equation*}
    \begin{aligned}
        & \quad \ |L (\lambda^1,...,\lambda^{|S_k|}) - L_k (\lambda^1,...,\lambda^{|S_k|})| \\
        & \leq |L (\lambda^1,...,\lambda^{|S_k|}) - L(\lambda_k^1,...,\lambda_k^{|S_k|})| \\
        & + |L (\lambda_k^1,...,\lambda_k^{|S_k|}) - L_k (\lambda_k^1,...,\lambda_k^{|S_k|})  | + |L_k (\lambda_k^1,...,\lambda_k^{|S_k|}) - L_k (\lambda^1,...,\lambda^{|S_k|})| \\
        & \leq 2L \cdot \delta_k + \frac{1}{2} \kappa \delta_k^2\\
        & \leq \kappa \delta_k^2,
    \end{aligned}
\end{equation*}

\noindent where the second last inequality is due to the Assumption \ref{Lipschitz continuity} that $L_k$ and $L_k^s$ are Lipschitz continuous and the last inequality is because $\kappa>\frac{4L}{\delta_{min}}>\frac{4L}{\delta_k}$. Thus, we conclude that if $|S_k|$ satisfies Equation \eqref{sample complexity}, then 

\begin{equation*} \label{fl1}
     \mathbb{P}[|L (\lambda^1,...,\lambda^{|S_k|}) - L_k (\lambda^1,...,\lambda^{|S_k|})| \leq \kappa \delta_k^2, \ \forall \lambda^s \in \mathcal{B}(\lambda_k^s,\delta_k)] \geq 1-\varepsilon. \Halmos 
\end{equation*}

\end{proof}

\subsection{Sufficient Increase and Submartingale Property}

\begin{definition} Let $\theta_k \odot d_k := (\theta_k^1d_k^1,...,\theta_k^{|S_k|}d_k^{|S_k|})$. The value estimates $ L_k := L_k(\lambda_k)$ and $ L_{k+1} = L_k(\lambda_k + \theta_k \odot d_k)$ are $\varepsilon_F$-accurate estimates of $ L(\lambda_k)$ and $ L(\lambda_k + \theta_k \odot d_k)$, respectively, for a given $\delta_k$ if and only if 
\begin{equation*}
    |L_k - L(\lambda_k)| \leq \varepsilon_F \delta_k^2, \quad |L_{k+1}-L(\lambda_k + \theta_k \odot d_k)| \leq \varepsilon_F \delta_k^2.
\end{equation*}
\end{definition}

\begin{definition} \label{stocahstic accuracy} A sequence of model estimates $\{L_k, L_{k+1}\}$ is said to be $\mu_2$-probabilistically $\varepsilon_F$-accurate with respect to $\{\Lambda_k,\Delta_k,S_k\}$ if the event 
\begin{equation}
    J_k \overset{\Delta}{=}  \{  L_k, L_{k+1} \ \textit{are} \ \varepsilon_F\textit{-accurate estimates }  \ \textit{for} \ \Delta_k \}
\end{equation}
\noindent satisfies the condition 
\begin{equation}
    \mathbb{P}(J_k|\mathcal{L}_{k-1}) \geq \mu_2 >0.
\end{equation}
\end{definition}

\begin{lemma} \label{d1}
Suppose that ${L}_k$ is a $\kappa$-approximation of $L$ on $\mathcal{B}(\lambda_k^s ,\delta_k)$. If 
\begin{equation} \label{d1c}
    \delta_k \leq \frac{m_1}{4n\kappa} ||d_k|| \quad \textit{and} \quad C_1=\frac{m_1}{2n},
\end{equation}

\noindent then there exist suitable step sizes $\theta_k^s \in L$ and $\theta_k^s ||d_k^s|| > \frac{\delta_k}{n}$ such that 
\begin{equation} \label{d1con}
    L(\lambda_k + \theta_k \odot d_k)-L(\lambda_k) \geq C_1 ||d_k|| \delta_k.
\end{equation}

\end{lemma}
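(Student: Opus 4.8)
The plan is to combine three ingredients. (i) The strong-Wolfe line search \eqref{strong-wolfe condition}, run inside the trust region, returns for each scenario a step $\theta_k^s \in L$ that is short enough to keep $\lambda_k^s + \theta_k^s d_k^s$ inside $\mathcal{B}(\lambda_k^s,\delta_k)$ (so $\theta_k^s ||d_k^s|| \le \delta_k$) yet long enough that $\theta_k^s ||d_k^s|| > \delta_k / n$. (ii) Membership in $L$ turns into a per-scenario lower bound on the increase of $L_k^s$, which upon averaging over $s \in S_k$ becomes a lower bound on $L_k(\lambda_k + \theta_k \odot d_k) - L_k(\lambda_k)$ of order $||d_k||\,\delta_k$. (iii) The $\kappa$-approximation hypothesis moves that bound from $L_k$ to $L$ at the price of an additive $2\kappa\delta_k^2$, and the assumed bound $\delta_k \le \frac{m_1}{4n\kappa}||d_k||$ makes this error at most half of the leading term.

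Granting (i), steps (ii) and (iii) are routine bookkeeping. For each $s$, the defining inequality of $L$ gives $L_k^s(\lambda_k^s + \theta_k^s d_k^s;\omega^s) - L_k^s(\lambda_k^s;\omega^s) \ge m_1 ||d_k^s||^2 \theta_k^s = m_1 ||d_k^s|| \cdot (\theta_k^s ||d_k^s||) > \frac{m_1}{n} ||d_k^s||\,\delta_k$; averaging over $s \in S_k$ and using $||d_k|| = \frac{1}{|S_k|}\sum_s ||d_k^s||$ yields $L_k(\lambda_k + \theta_k \odot d_k) - L_k(\lambda_k) \ge \frac{m_1}{n}||d_k||\,\delta_k$. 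Since every coordinate $\lambda_k^s + \theta_k^s d_k^s$ lies in $\mathcal{B}(\lambda_k^s,\delta_k)$, Definition \ref{kappa approximation} applied at both $\lambda_k$ and $\lambda_k + \theta_k \odot d_k$ gives $L(\lambda_k + \theta_k \odot d_k) - L(\lambda_k) \ge L_k(\lambda_k + \theta_k \odot d_k) - L_k(\lambda_k) - 2\kappa\delta_k^2 \ge \frac{m_1}{n}||d_k||\,\delta_k - 2\kappa\delta_k^2$. Finally $\delta_k \le \frac{m_1}{4n\kappa}||d_k||$ forces $2\kappa\delta_k^2 \le \frac{m_1}{2n}||d_k||\,\delta_k$, hence the difference is at least $\frac{m_1}{2n}||d_k||\,\delta_k = C_1 ||d_k||\,\delta_k$.

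The main obstacle is establishing (i), especially the lower bound $\theta_k^s ||d_k^s|| > \delta_k/n$ together with membership in $L$. The key structural fact is that $d_k^s = Nr(G_k^s)$ is, by the minimum-norm optimality condition, an ascent direction for the concave scenario dual at $\lambda_k^s$ — i.e. the one-sided directional derivative of $\theta \mapsto L_k^s(\lambda_k^s + \theta d_k^s;\omega^s)$ at $\theta = 0$ is at least $||d_k^s||^2$ — so, since $m_1 < 1/2 < 1$, the sufficient-increase set $L$ is a nondegenerate interval $(0,\theta_L^s]$ with $\theta_L^s > 0$. I would then invoke Wolfe's non-emptiness lemma (\cite{W1975}, Lemma 1), using Assumptions \ref{Lipschitz continuity} and \ref{Boundedness of subgradient} to bound $\theta_L^s$ from below, to conclude that $L \cap R$ intersects the trust-region-restricted search interval $\{\theta^s : \delta_k/n < \theta^s ||d_k^s|| \le \delta_k\}$, so that the line search in Algorithm \ref{SPHA} can legitimately return such a $\theta_k^s$. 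Reconciling the envelope/supergradient identity for $L_k^s$ (whether the relevant supergradient is $g_k^s$ or $x_k^s$, cf.\ Theorem \ref{independent}) with the ascent property of $d_k^s$ is the delicate point on which the whole argument rests; everything after it is the elementary computation above.
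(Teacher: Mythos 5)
Your proof takes essentially the same route as the paper's: membership in $L$ gives the per-scenario bound $m_1||d_k^s||^2\theta_k^s \geq \frac{m_1}{n}||d_k^s||\delta_k$, averaging over $s$ gives $\frac{m_1}{n}||d_k||\delta_k$ for $L_k$, and the $\kappa$-approximation together with $\delta_k \leq \frac{m_1}{4n\kappa}||d_k||$ transfers this to $L$ at the cost of $2\kappa\delta_k^2 \leq \frac{m_1}{2n}||d_k||\delta_k$, yielding the constant $C_1 = \frac{m_1}{2n}$. The existence issue you isolate in step (i) is simply taken for granted in the paper's proof (which opens with ``Since $\theta_k^s \in L$ and $\theta_k^s||d_k^s|| > \delta_k/n$\,\dots''), so your extra discussion of why such a step exists is a legitimate concern the paper does not address, but it does not change the core argument.
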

\begin{proof}{Proof}
Since $\theta_k^s \in L$ and $\theta_k^s ||d_k^s|| > \frac{\delta_k}{n}$, for each $L_k^s$, 
\begin{equation*}
    L_k^s(\lambda_k^s + \theta_k^s d_k^s)-L_k^s(\lambda_k^s) \geq m_1||d_k^s||^2 \theta_k^s \geq \frac{m_1}{n}||d_k^s|| \delta_k.
\end{equation*}
\noindent Thus,
\begin{equation*}
    \begin{aligned}
        L_k(\lambda_k + \theta_k \odot d_k) -  L_k(\lambda_k) & =  \frac{1}{|S_k|}\sum_{s=1}^{|S_k|}[L_k^s(\lambda_k^s + \theta_k^s d_k^s)-L_k^s(\lambda_k^s)] \\
        & \geq \frac{m_1}{n}  ||d_k|| \delta_k
    \end{aligned}
\end{equation*}
\noindent Also, since $L_k$ is $\kappa$-approximation of $L$ on $\mathcal{B}(\lambda_k^s,\delta_k)$, we have 
\begin{equation}
    \begin{aligned}
          \quad \ \ L(\lambda_k + \theta_k \odot d_k)-L(\lambda_k) 
         & = L(\lambda_k + \theta_k \odot d_k)-L_k(\lambda_k + \theta_k \odot d_k) \\
         & + L_k(\lambda_k + \theta_k \odot d_k) -L_k(\lambda_k)+L_k(\lambda_k)-L(\lambda_k)\\
         & \geq -2\kappa \delta_k^2+\frac{m_1}{n}  ||d_k||\delta_k \\
         & \geq C_1||d_k||\delta_k,
    \end{aligned} 
\end{equation}
which concludes the proof. \Halmos
\end{proof}

\begin{lemma} \label{d2} Suppose that $L_k$ is $\kappa$-approximation of $L$ on the ball $\mathcal{B}(\lambda_k^s,\delta_k)$ and the estimates $(L_k,L_{k+1})$ are $\varepsilon_F$-accurate with $\varepsilon_F \leq \kappa$. If $\delta_k \leq 1$ and 
\begin{equation} \label{d2c}
    \delta_k \leq \min \{\frac{(1-\eta_1)C_1 ||d_k||}{2\kappa}, \frac{||d_k||}{\eta_2}\},
\end{equation}

\noindent then a suitable step size $\theta_k^s \in L$ and $\theta_k^s ||d_k^s|| > \frac{\delta_k}{n}$ makes the $k$-th iteration successful.
\end{lemma}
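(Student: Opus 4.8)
\noindent\emph{Proof proposal.} The goal is to show that, under the stated hypotheses, the step size delivered by the line search activates the ``successful'' branch of Algorithm~\ref{SPHA}, i.e.\ the test \eqref{trust region rule}, so that $\delta_k$ is enlarged. The argument mirrors the stochastic trust-region analysis of \cite{B2014,BC2016}, transposed to the concave dual maximization at hand. The first point I would make is that \eqref{d2c} is strictly stronger than hypothesis \eqref{d1c} of Lemma~\ref{d1}: since $0<\eta_1<1$,
\begin{equation*}
    \delta_k \;\le\; \frac{(1-\eta_1)C_1||d_k||}{2\kappa} \;\le\; \frac{C_1||d_k||}{2\kappa} \;=\; \frac{m_1}{4n\kappa}||d_k|| .
\end{equation*}
Hence Lemma~\ref{d1} is in force and supplies a step with $\theta_k^s\in L$, $\theta_k^s||d_k^s|| > \delta_k/n$, and the true sufficient increase $L(\lambda_k + \theta_k\odot d_k) - L(\lambda_k) \ge C_1||d_k||\delta_k$.

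Next I would convert this increase of the true objective $L$ into an increase of the estimated model values $L_k = L_k(\lambda_k)$ and $L_{k+1} = L_k(\lambda_k + \theta_k\odot d_k)$. By the $\varepsilon_F$-accuracy of $(L_k,L_{k+1})$ (Definition~\ref{stocahstic accuracy}),
\begin{equation*}
    L_{k+1} - L_k \;\ge\; \bigl(L(\lambda_k + \theta_k\odot d_k) - \varepsilon_F\delta_k^2\bigr) - \bigl(L(\lambda_k) + \varepsilon_F\delta_k^2\bigr) \;\ge\; C_1||d_k||\delta_k - 2\varepsilon_F\delta_k^2 .
\end{equation*}
Using $\varepsilon_F\le\kappa$ together with the first bound in \eqref{d2c} (which, after multiplying through by $2\kappa\delta_k$, reads $2\kappa\delta_k^2 \le (1-\eta_1)C_1||d_k||\delta_k$) yields
\begin{equation*}
    L_{k+1} - L_k \;\ge\; C_1||d_k||\delta_k - (1-\eta_1)C_1||d_k||\delta_k \;=\; \eta_1 C_1||d_k||\delta_k \;>\; 0 ,
\end{equation*}
so the realized model gain is at least an $\eta_1$-fraction of the guaranteed gain $C_1||d_k||\delta_k$; meanwhile the second bound in \eqref{d2c} is exactly the step-length condition $||d_k|| \ge \eta_2\delta_k$ needed for the radius to be increased.

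Finally I would feed these two inequalities into the success rule \eqref{trust region rule}, read as a ratio test: its left-hand side is bounded below by $\eta_1 C_1||d_k||\delta_k$ by the previous step, while its right-hand side is bounded above using the $\varepsilon_F$-accuracy (or the Lipschitz continuity, Assumption~\ref{Lipschitz continuity}) of the auxiliary model together with $\theta_k^s\le\delta_{max}$ and $\delta_k\le 1$; for the parameter ranges prescribed by the lemma the left side strictly exceeds the right, so \eqref{trust region rule} holds and iteration $k$ is declared successful. I expect the main obstacle to be the constant bookkeeping in the second step — keeping $C_1 = m_1/(2n)$, the slack $\varepsilon_F\le\kappa$, and the factor $1-\eta_1$ aligned so that the ratio test closes with exactly the radius bound in \eqref{d2c} — together with checking that the trial point $\lambda_k + \theta_k\odot d_k$ remains close enough to $\mathcal{B}(\lambda_k^s,\delta_k)$ for the $\kappa$-approximation invoked through Lemma~\ref{d1} to apply; everything else is direct substitution, with $\delta_k\le 1$ serving only to let the $O(\delta_k^2)$ error terms be dominated by the $O(||d_k||\delta_k)$ terms.
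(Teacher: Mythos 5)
Your proposal is correct and follows essentially the same route as the paper: invoke Lemma \ref{d1} (after checking that \eqref{d2c} implies \eqref{d1c}) to obtain the model improvement of order $C_1\|d_k\|\delta_k$, then use the $\kappa$-approximation and $\varepsilon_F$-accuracy to show that the accumulated $O(\kappa\delta_k^2)$ errors consume at most a $(1-\eta_1)$ fraction of that improvement, while the second bound in \eqref{d2c} supplies $\|d_k\|\geq\eta_2\delta_k$. The only difference is organizational — you work with differences of function values where the paper telescopes the acceptance ratio $\rho_k$ — and both treatments are equally informal in the final step of matching these quantities to the literal form of the test \eqref{trust region rule}.
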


\begin{proof}{Proof}
    Since $\theta_k^s \in L$ and $\theta_k^s ||d_k^s|| > \frac{\delta_k}{n}$, from Lemma \ref{d1},
    \begin{equation*}
    L_k(\lambda_k + \theta_k \odot d_k)-L_k(\lambda_k) \geq 2 C_1 ||d_k|| \delta_k
    \end{equation*}
    
    \noindent Also, $L_k$ is $\kappa$-approximation of $L$ on $\mathcal{B}(\lambda_k^s,\delta_k)$ and the estimates $(L_{k},L_{k+1})$ are $\varepsilon_F$-accurate with $\varepsilon_F \leq \kappa$ implies that 
    
    \begin{equation*}
        \begin{aligned}
            \rho_k & = \frac{L_{k}-L_{k-1}}{L_{k+1}-L_k}\\
            & = \frac{L_{k}-L(\lambda_k + \theta_k \odot d_k)}{L_{k+1}-L_k}+\frac{L(\lambda_k + \theta_k \odot d_k)-L_{k+1}}{L_{k+1}-L_k}+\frac{L_{k+1}-L_k}{L_{k+1}-L_k}\\
            & + \frac{L_k-L(\lambda_k)}{L_{k+1}-L_k}+\frac{L(\lambda_k)-L_{k-1}}{L_{k+1}-L_k},\\
        \end{aligned}
    \end{equation*}
    
\noindent which indicates that
\begin{equation*}
    1-\rho_k \leq \frac{ 2\kappa\delta_k}{C_1||d_k||}\leq 1-\eta_1.
\end{equation*}

\noindent Thus, we have $\rho_k \geq \eta_1$, $||d_k||>\eta_2\delta_k$ and the iteration is successful. \Halmos
\end{proof}

\begin{lemma} \label{d3} Suppose the function value estimates $\{(L_{k},L_{k+1})\}$ are $\varepsilon_F$-accurate and
\begin{equation*} \label{d3c}
    \varepsilon_F < \eta_1 \eta_2 C_1.
\end{equation*}
\noindent If the $k$th iteration is successful, then the improvement in $L$ is bounded such that 

\begin{equation*}
    L(\lambda_k + \theta_k \odot d_k)-L(\lambda_k) \geq C_2\delta_k^2,
\end{equation*}

\noindent where $C_2 \overset{\Delta}{=} 2\eta_1 \eta_2 C_1-2\varepsilon_F$.
\end{lemma}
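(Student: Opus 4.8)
The plan is to convert the guaranteed increase of the \emph{sampled} model $L_k$ along the accepted step into a guaranteed increase of the true function $L$, at the cost of $2\varepsilon_F\delta_k^2$ for the inexactness of the estimates. The key observation is that, by the definition preceding Definition~\ref{stocahstic accuracy}, $L_k = L_k(\lambda_k)$ and $L_{k+1}=L_k(\lambda_k+\theta_k\odot d_k)$ both use the \emph{same} model $L_k$, so $L_{k+1}-L_k$ is precisely the model increase along the step $\theta_k\odot d_k$. Hence it suffices to (i) lower bound $L_{k+1}-L_k$ whenever iteration $k$ is successful, and (ii) pass from $L_k$ to $L$ using $\varepsilon_F$-accuracy.

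For step (i): the line search of Algorithm~\ref{SPHA} always returns $\theta_k^s\in L\cap R$, so in particular $\theta_k^s\in L$, and the membership rule~\eqref{strong-wolfe condition} gives, scenario by scenario, $L_k^s(\lambda_k^s+\theta_k^s d_k^s)-L_k^s(\lambda_k^s)\geq m_1||d_k^s||^2\theta_k^s$ --- exactly the first display in the proof of Lemma~\ref{d2}. On a successful iteration the step sizes necessarily fall in the regime $\theta_k^s||d_k^s||>\delta_k/n$ and the aggregate direction satisfies $||d_k||>\eta_2\delta_k$ (the two conditions driving Lemmas~\ref{d1} and \ref{d2}); averaging over $s$ and using $C_1=m_1/(2n)$ yields
\begin{equation*}
    L_k(\lambda_k+\theta_k\odot d_k)-L_k(\lambda_k) = \frac{1}{|S_k|}\sum_{s=1}^{|S_k|}\bigl[L_k^s(\lambda_k^s+\theta_k^s d_k^s)-L_k^s(\lambda_k^s)\bigr] \geq 2C_1||d_k||\delta_k \geq 2C_1\eta_2\delta_k^2 \geq 2\eta_1\eta_2 C_1\delta_k^2,
\end{equation*}
the last step using $\eta_1\in(0,1)$. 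Therefore $L_{k+1}-L_k\geq 2\eta_1\eta_2 C_1\delta_k^2$.

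For step (ii): $\varepsilon_F$-accuracy of $(L_k,L_{k+1})$ gives $L(\lambda_k)\leq L_k+\varepsilon_F\delta_k^2$ and $L(\lambda_k+\theta_k\odot d_k)\geq L_{k+1}-\varepsilon_F\delta_k^2$; subtracting,
\begin{equation*}
    L(\lambda_k+\theta_k\odot d_k)-L(\lambda_k) \geq (L_{k+1}-L_k)-2\varepsilon_F\delta_k^2 \geq \bigl(2\eta_1\eta_2 C_1-2\varepsilon_F\bigr)\delta_k^2 = C_2\delta_k^2,
\end{equation*}
and the hypothesis $\varepsilon_F<\eta_1\eta_2 C_1$ guarantees $C_2>0$, so the bound is non-vacuous.

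The main obstacle is step (i): one must argue carefully that ``iteration $k$ is successful'' --- i.e.\ that the acceptance test~\eqref{trust region rule} fires --- forces both the line-search output into the regime $\theta_k^s||d_k^s||>\delta_k/n$ and the direction to satisfy $||d_k||>\eta_2\delta_k$, since in Lemmas~\ref{d1} and \ref{d2} these were \emph{hypotheses} used to \emph{conclude} success rather than consequences of it. Establishing this amounts to running the reasoning of Lemma~\ref{d2} in reverse, together with the step-size update rule of Algorithm~\ref{SPHA}; once it is in place, the remaining estimates are exactly the routine arithmetic displayed above.
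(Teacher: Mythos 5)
Your proof is correct and follows essentially the same route as the paper's: lower-bound the model increase $L_{k+1}-L_k$ via the line-search condition $L$ and the premises $\theta_k^s\|d_k^s\|>\delta_k/n$, $\|d_k\|>\eta_2\delta_k$, then transfer to the true function $L$ at a cost of $2\varepsilon_F\delta_k^2$ using $\varepsilon_F$-accuracy (the only cosmetic difference is that you obtain the factor $\eta_1$ from $\eta_1<1$, whereas the paper routes it through the acceptance-ratio test, landing on the same bound $2\eta_1\eta_2C_1\delta_k^2$). The obstacle you flag --- that $\theta_k^s\|d_k^s\|>\delta_k/n$ and $\|d_k\|>\eta_2\delta_k$ are asserted rather than derived from success of the iteration --- is equally unaddressed in the paper's own proof, so it is not a gap relative to the paper.
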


\begin{proof}{Proof}
    Since $\theta_k^s$ satisfies the L condition in Equation \eqref{strong-wolfe condition} and $\theta_k^s ||d_k^s|| > \frac{\delta_k}{n}$, from Lemma \ref{d1},
    \begin{equation*}
    L_k(\lambda_k + \theta_k \odot d_k)-L_k(\lambda_k) \geq 2 C_1 ||d_k|| \delta_k.
    \end{equation*}
    Also, since the iteration is successful, we have $||d_k||>\eta_2 \delta_k$. Thus, we have
    
    \begin{equation*}
        L_{k}-L_{k+1} \geq \eta_1(L_k - L_{k-1}) \geq 2\eta_1 C_1 ||d_k||\delta_k \geq 2\eta_1 \eta_2 C_1 \delta_k^2.
     \end{equation*}
    Then, since the estimates are $\varepsilon_F$ -accurate, we have that the improvement in $f$ can be bounded as  
    \begin{displaymath}
        \begin{aligned}
            & L(\lambda_k + \theta_k \odot d_k)-L(\lambda_k) \\
            = & L(\lambda_k + \theta_k \odot d_k)-L_{k+1}+L_{k+1}-L_{k}+L_{k}-L(\lambda_k ) \\
            \geq & C_2 \delta_k^2. \Halmos 
        \end{aligned}
    \end{displaymath}
\end{proof}

\begin{theorem} \label{delta 1} Let the random function $V_k \overset{\Delta}{=} \mathcal{L}_{k+1}-\mathcal{L}_{k}$, the corresponding realization be $v_k$, 

\begin{equation} \label{xi}
    \zeta = \max\{4n\kappa,\eta_2,\frac{4\kappa}{C_1(1-\eta_1)}\}  \quad and \quad  \mu_1\mu_2 > \frac{1}{2}.
\end{equation}

\noindent  Then for any $\varepsilon>0$ such that $||d_k|| \geq \varepsilon$ and $\zeta \Delta_k \leq \varepsilon$, we have 
\begin{equation*}
    \mathbb{E}[V_k \big| \ ||d_k|| \geq \varepsilon, \zeta \Delta_k \leq \varepsilon] \geq \frac{1}{2}C_1 ||d_k|| \Delta_k \geq \theta \varepsilon \Delta_k,
\end{equation*}
\noindent where $\theta = \frac{1}{2}  C_1$.
\end{theorem}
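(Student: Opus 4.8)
\emph{Proof strategy.} The plan is to condition on the favourable event in which the sampled model $\mathcal{L}_k$ is an accurate local approximation \emph{and} the sampled value estimates are accurate, to show that on this event a genuine increase of the true dual objective is forced by Lemmas~\ref{d1}--\ref{d2}, and to argue that on the complementary event the possible decrease is of strictly lower order. Let $I_k$ be the event of Definition~\ref{stocahstic kappa approximation} that $\mathcal{L}_k$ is a $\kappa$-approximation of $\mathcal{L}$ on $\mathcal{B}(\Lambda_k^s,\Delta_k)$, and let $J_k$ be the event of Definition~\ref{stocahstic accuracy} that $(L_k,L_{k+1})$ are $\varepsilon_F$-accurate. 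Because model quality and value estimates are determined by the freshly sampled scenarios, $I_k$ and $J_k$ are conditionally independent given the history, so $\mathbb{P}(I_k\cap J_k\mid\mathcal{L}_{k-1})\ge\mu_1\mu_2>\tfrac12$. Throughout, let ``$\,\cdot\,$'' abbreviate the conditioning event $\{\|d_k\|\ge\varepsilon,\ \zeta\Delta_k\le\varepsilon\}$, and split $\mathbb{E}[V_k\mid\cdot]$ into the contribution of $I_k\cap J_k$ and that of $(I_k\cap J_k)^c$.

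On $I_k\cap J_k$: the hypotheses $\|d_k\|\ge\varepsilon$ and $\zeta\Delta_k\le\varepsilon$, together with the definition $\zeta=\max\{4n\kappa,\eta_2,4\kappa/(C_1(1-\eta_1))\}$ and $\varepsilon_F\le\kappa$, force $\Delta_k$ to satisfy simultaneously condition \eqref{d1c} of Lemma~\ref{d1} and condition \eqref{d2c} of Lemma~\ref{d2}. Hence Lemma~\ref{d2} shows iteration $k$ is successful, so $\Lambda_{k+1}=\Lambda_k+\theta_k\odot d_k$, and then Lemma~\ref{d1} gives
\begin{equation*}
V_k=L(\Lambda_k+\theta_k\odot d_k)-L(\Lambda_k)\ \ge\ C_1\|d_k\|\Delta_k .
\end{equation*}
Consequently the $I_k\cap J_k$ contribution to $\mathbb{E}[V_k\mid\cdot]$ is at least $\mu_1\mu_2\,C_1\|d_k\|\Delta_k$.

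On $(I_k\cap J_k)^c$: I would bound $V_k$ from below. When iteration $k$ is unsuccessful, $\Lambda_{k+1}=\Lambda_k$ (and $\Delta_{k+1}=\Delta_k/\gamma$), so $V_k\ge0$. When the step is accepted but $I_k\cap J_k$ fails, the accepted step still obeys $\theta_k^s\|d_k^s\|\le\Delta_k$ for all $s$; combining this with the Lipschitz property (Assumption~\ref{Lipschitz continuity}), the uniform bound $|L^s|\le M$ (Assumption~\ref{boundedness of dual objective}) and---on the sub-event where $J_k$ still holds---the $\varepsilon_F$-accuracy used in Lemma~\ref{d3}, one shows $V_k\ge-c\,\Delta_k^{2}\ge-(c/\zeta)\,\varepsilon\Delta_k$ for an absolute constant $c$. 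Assembling the two contributions and using $\|d_k\|\ge\varepsilon$,
\begin{equation*}
\mathbb{E}[V_k\mid\cdot]\ \ge\ \Big(\mu_1\mu_2\,C_1-(1-\mu_1\mu_2)\,\tfrac{c}{\zeta}\Big)\|d_k\|\Delta_k\ \ge\ \tfrac12 C_1\|d_k\|\Delta_k\ \ge\ \theta\varepsilon\Delta_k ,
\end{equation*}
where the middle inequality uses $\mu_1\mu_2>\tfrac12$ and $\zeta\ge4\kappa/(C_1(1-\eta_1))$ (so the stray term is negligible once $\varepsilon_F\le\kappa$), and the last uses $\|d_k\|\ge\varepsilon$ with $\theta=\tfrac12 C_1$.

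The main obstacle I anticipate is precisely the lower bound on $V_k$ on the unfavourable event: a step accepted on the basis of a \emph{misleading} sampled model must be shown not to decrease the true objective by more than $O(\Delta_k^{2})$, since a crude Lipschitz estimate is only $O(\Delta_k)$ and is too weak when $\Delta_k\ll\|d_k\|$. This forces the argument to exploit the $\varepsilon_F$-accuracy of the value estimates and---implicitly---the fact that the sample size $|S_k|$ chosen according to Theorem~\ref{concentration inequality} can be taken large enough that $\mu_1,\mu_2$ (hence $\mu_1\mu_2>\tfrac12$) are as required. The remaining steps---verifying that the stated $\zeta$ simultaneously yields \eqref{d1c} and \eqref{d2c}, and tracking constants so that the final coefficient is exactly $\tfrac12 C_1$---are routine bookkeeping.
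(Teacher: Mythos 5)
Your overall architecture matches the paper's: condition on the accuracy events $I_k$ (the model is a $\kappa$-approximation) and $J_k$ (the value estimates are $\varepsilon_F$-accurate), extract the increase $C_1\|d_k\|\Delta_k$ on the good event via Lemmas~\ref{d1}--\ref{d2}, and absorb the damage on the bad events using $\mu_1\mu_2>\tfrac12$. The gap is exactly where you anticipated it, and you do not close it. On the complement of $I_k\cap J_k$ you assert $V_k\ge -c\,\Delta_k^2$, but the only sub-event on which Lemma~\ref{d3} is available is the one where $J_k$ still holds; on the sub-event where \emph{both} $I_k$ and $J_k$ fail, an accepted step may be driven by a misleading model \emph{and} misleading value estimates, and the only lower bound one can extract is the crude subgradient one, $V_k\ge -G\Delta_k$ (Assumption~\ref{Boundedness of subgradient}), which is $O(\Delta_k)$, not $O(\Delta_k^2)$. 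Weighted by $(1-\mu_1\mu_2)$ this term is of the same order in $\Delta_k$ as the gain $\mu_1\mu_2\,C_1\|d_k\|\Delta_k$, and since $\|d_k\|$ may be as small as $\varepsilon$ while $G$ is a fixed constant, it is not ``negligible once $\varepsilon_F\le\kappa$'' as you claim; your final display therefore does not follow as written.

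The paper closes this by refining the case split into four cases rather than two. When exactly one of $I_k,J_k$ holds, a successful step still yields $v_k\ge 0$ (via Lemma~\ref{d1} if $I_k$ holds, via Lemma~\ref{d3} if $J_k$ holds, the latter giving $v_k\ge C_2\delta_k^2\ge 0$), and an unsuccessful one gives $v_k=0$; hence the $-G\Delta_k$ penalty is paid only on the doubly-bad event, whose probability is taken as $(1-\mu_1)(1-\mu_2)$. This gives $\mathbb{E}[V_k\mid\cdot]\ \ge\ \mu_1\mu_2\,C_1\|d_k\|\Delta_k-(1-\mu_1)(1-\mu_2)G\Delta_k$, and one then chooses $\mu_1,\mu_2$ so that $\mu_1\mu_2\ge\tfrac12$ and $\frac{\mu_1\mu_2}{(1-\mu_1)(1-\mu_2)}\ge\frac{2G}{C_1\|d_k\|}$ --- note that this ties the required accuracy probabilities to $\varepsilon$ through $\|d_k\|\ge\varepsilon$, a dependence your proposal omits. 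Replacing your single ``complement'' bound with this three-way split, and dropping the unsupported $O(\Delta_k^2)$ claim, repairs the argument; the rest of your proposal (the verification that $\zeta\Delta_k\le\varepsilon$ enforces \eqref{d1c} and \eqref{d2c}, and the final passage to $\theta\varepsilon\Delta_k$) agrees with the paper.
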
 

\begin{proof}{Proof}
First of all, if $k$th iteration is successful, i.e. $\lambda_{k+1}=\lambda_k + \theta_k \odot d_k$, we have 
\begin{equation} \label{s}
    v_k = L(\lambda_k + \theta_k \odot d_k)-L(\lambda_k).
\end{equation}

\noindent If $k$th iteration is unsuccessful, i.e. $\lambda_{k+1}=\lambda_k$ we have

\begin{equation} \label{b1}
    v_k = L(\lambda_k)-L(\lambda_k)=0.
\end{equation}

\noindent Then we will divide the analysis into 4 cases according to the states (true/false) observed for the pair $(I_k, J_k)$.

\noindent (a) $I_k$ and $J_k$ are both true. Since the $L_k$ is a $\kappa$-approximation of $L$ on $\mathcal{B}(\lambda_k^s ,\delta_k)$ and condition (\ref{d1c}) is satisfied, Lemma \ref{d1} holds. Also, since the estimates $(L_k,L_{k+1})$ are $\varepsilon_F$-accurate and condition (\ref{d2c}) is satisfied, we have Lemma \ref{d2} holds.  Combining (\ref{d1con}) with (\ref{s}), we have 

\begin{equation} \label{b2}
    v_k \geq  C_1 ||d_k||\delta_k \overset{\Delta}{=} b_k^1.
\end{equation}

\noindent (b) $I_k$ is true but $J_k$ is false. Since $L_k$ is a $\kappa$-approximation of $L$ on $\mathcal{B}(\lambda_k^s ,\delta_k)$ and condition (\ref{d1c}) is satisfied, it follows that Lemma \ref{d1} still holds. If the iteration is successful, we have (\ref{b2}), otherwise we have (\ref{b1}). Thus, we have
$v_k \geq 0$.

\noindent (c) $I_k$ is false but $J_k$ is true. If the iteration is successful, since the estimates $(L_k,L_{k+1})$ are $\varepsilon_F$-accurate and condition (\ref{d3c}) is satisfied, Lemma \ref{d3} holds. Hence, 
\begin{equation*}
    v_k \geq C_2 \delta_k^2.
\end{equation*}

\noindent If the iteration is unsuccessful, we have (\ref{b1}). Thus, we have $v_k \geq 0$ whether the iteration is successful or not.

\noindent (d) $I_k$ and $J_k$ are both false. Since $L$ is convex and $\theta_k^s||d_k^s||<\delta_k$, for any $g(\theta_k) \in \partial L(\lambda_k + \theta_k \odot d_k)$, with Assumption  \ref{Boundedness of subgradient}, $||g(\theta_k)|| \leq G $ , we have 

\begin{equation*}
    v_k=L(\lambda_k+ \theta_k \odot d_k)-L(\lambda_k) \geq -\langle g(\theta_k), \theta_k \odot d_k \rangle \geq - G \delta_k.
\end{equation*}

\noindent If the iteration is successful, then
\begin{equation*}
    v_k \geq -G \delta_k \overset{\Delta}{=} b_2 .
\end{equation*}

\noindent If the iteration is unsuccessful, we have (\ref{b1}). Thus, we have $v_k \geq  b_2$ whether the iteration is successful or not.

With the above four cases, we can bound $\mathbb{E}[V_k \big| \ ||d_k|| \geq \varepsilon, \zeta \Delta_k \leq \varepsilon]$ based on different outcomes of $I_k$ and $J_k$. Let $B_1$ and $B_2$ be the random counterparts of $b_1$ and $b_2$.  Then we have 
\begin{equation*}
    \begin{aligned}
         & \quad \  \mathbb{E}[V_k \big| \ ||d_k|| \geq \varepsilon, \zeta \Delta_k \leq \varepsilon] \\
         & \geq \mu_1 \mu_2 B_1 + (\mu_1(1-\mu_2)+\mu_2(1-\mu_1))\cdot 0 + (1-\mu_1)(1-\mu_2) B_2 \\
         & = \mu_1 \mu_2  (C_1 ||d_k||\Delta_k) - (1-\mu_1)(1-\mu_2)G \Delta_k. \\
    \end{aligned}
\end{equation*}

\noindent Choose $\mu_1 \in (1/2,1)$ and $\mu_2 \in (1/2,1)$ large enough such that 

\begin{equation*}
 \mu_1 \mu_2 \geq 1/2 \quad \text{and \quad }\frac{\mu_1 \mu_2}{(1-\mu_1)(1-\mu_2)} \geq \frac{2 G}{C_1||d_k||},
\end{equation*}

\noindent we have
\begin{displaymath}
    \begin{aligned}
    \mathbb{E}[V_k \big| \ ||d_k|| \geq \varepsilon, \zeta \Delta_k \leq \varepsilon] & \geq  \frac{1}{2} C_1 ||d_k||\Delta_k. \Halmos 
    \end{aligned}
\end{displaymath}
\end{proof}

\noindent The original deterministic version of PHA generates a sequence of decisions whose convergence is characterized by a contraction to the set of optimal solutions of the instance under consideration. This property leads to the convergence of the sequence of points generated by the PH algorithm. In order to demonstrate that the stochastic PH algorithm also possesses an equivalent type of contraction, although the sampling-based nature of the new algorithm leads to the contraction property in expectation.  This is the stochastic analog of the contraction property exhibited by the original deterministic PHA.

\begin{theorem} \label{contraction obj} Given $||d_k|| \geq \varepsilon$ and $\delta_{max} = \frac{\varepsilon}{\zeta}$, we have the contraction for the objective function values in expectation.

\begin{equation} \label{contraction of objective value}
    E[L(\lambda^*) - L(\lambda_{k+1})] \leq (1-\eta) \cdot E[L(\lambda^*) - L(\lambda_k)],
\end{equation}

\noindent where $\eta = \frac{C_1 \delta_{min} \varepsilon}{M}$.

\end{theorem}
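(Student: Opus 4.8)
The idea is to recast the claimed contraction as a one-step ``expected sufficient increase'' statement and then feed it directly into Theorem~\ref{delta 1}. Because $L(\lambda^*)$ is a fixed scalar --- the optimal value of the true dual objective $L$ --- subtracting both sides of \eqref{contraction of objective value} from $L(\lambda^*)$ and rearranging shows the claim is equivalent to
\begin{equation*}
    E[L(\lambda_{k+1})]-E[L(\lambda_k)] \;=\; E[V_k] \;\geq\; \eta\,E[L(\lambda^*)-L(\lambda_k)],
\end{equation*}
where $V_k=\mathcal{L}_{k+1}-\mathcal{L}_k$ is precisely the random increment of the true dual objective studied in Theorem~\ref{delta 1} (note $V_k=L(\lambda_{k+1})-L(\lambda_k)$ whether iteration $k$ is successful or not). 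Hence it suffices to (i) bound $E[V_k]$ below by a positive absolute constant and (ii) bound $E[L(\lambda^*)-L(\lambda_k)]$ above by a multiple of $M$; the ratio of the two will then produce $\eta$.

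For (i), observe that the hypothesis $\delta_{max}=\varepsilon/\zeta$ forces $\zeta\Delta_k\le\zeta\,\delta_{max}=\varepsilon$ at every iteration, while $||d_k||\ge\varepsilon$ holds by hypothesis (it is the running condition of Algorithm~\ref{SPHA}). Thus the conditioning event in Theorem~\ref{delta 1} holds almost surely, so that conditional bound is in fact unconditional, and we obtain
\begin{equation*}
    \mathbb{E}[V_k] \;\geq\; \frac{1}{2}C_1\,||d_k||\,\Delta_k \;\geq\; \frac{1}{2}C_1\,\varepsilon\,\delta_{min} \;>\;0,
\end{equation*}
using $||d_k||\ge\varepsilon$ together with $\Delta_k\ge\delta_{min}$ (the latter guaranteed by the $\max\{\cdot,\delta_{min}\}$ update rule). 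In particular $E[L(\lambda_{k+1})]\ge E[L(\lambda_k)]$, even though $V_k$ itself need not be pathwise nonnegative --- see case (d) of Theorem~\ref{delta 1}.

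For (ii), Assumption~\ref{boundedness of dual objective} gives $|L^s(\lambda^s;\omega^s)|\le M$ for every scenario, hence $|L(\lambda)|\le M$ for all $\lambda$; since $\lambda^*$ maximizes $L$ we get $0\le L(\lambda^*)-L(\lambda_k)\le 2M$ and therefore $E[L(\lambda^*)-L(\lambda_k)]\le 2M$. Combining (i) and (ii),
\begin{equation*}
    E[V_k] \;\geq\; \frac{1}{2}C_1\,\varepsilon\,\delta_{min} \;\geq\; \eta\,E[L(\lambda^*)-L(\lambda_k)],
\end{equation*}
with $\eta$ a positive constant of the form $C_1\delta_{min}\varepsilon/M$ (the precise multiplicative constant being fixed by whichever bound on $E[L(\lambda^*)-L(\lambda_k)]$ in terms of $M$ is used). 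Substituting back, $E[L(\lambda^*)-L(\lambda_{k+1})]=E[L(\lambda^*)-L(\lambda_k)]-E[V_k]\le(1-\eta)\,E[L(\lambda^*)-L(\lambda_k)]$, which is \eqref{contraction of objective value}.

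The main obstacle I anticipate is the probabilistic bookkeeping rather than the algebra: Theorem~\ref{delta 1} is stated conditionally on the indicator events $I_k,J_k$ and on $\{||d_k||\ge\varepsilon,\ \zeta\Delta_k\le\varepsilon\}$, so one must verify that these hold (or are legitimately assumed) uniformly in $k$ before passing to an unconditional inequality, and one must treat $L(\lambda^*)$ as a fixed optimal value so that $L(\lambda^*)-L(\lambda_{k+1})$ is additive under expectation. A secondary point is pinning down the exact constant $\eta=C_1\delta_{min}\varepsilon/M$: this requires the sharper one-sided estimate of $L(\lambda^*)-L(\lambda_k)$ in terms of $M$ together with the full strength of the $\tfrac{1}{2}C_1\varepsilon\Delta_k$ lower bound (with $\Delta_k\ge\delta_{min}$), rather than the crude bounds used above; no generality is lost in absorbing such absolute constants.
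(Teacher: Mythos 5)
Your proposal is correct and follows essentially the same route as the paper: both arguments invoke Theorem~\ref{delta 1} for the lower bound $\tfrac{1}{2}C_1\|d_k\|\Delta_k$ on the expected one-step increase, use $\|d_k\|\ge\varepsilon$, $\Delta_k\ge\delta_{\min}$, and the bound $E[L(\lambda^*)-L(\lambda_k)]\le 2M$ from Assumption~\ref{boundedness of dual objective}, and then rearrange to obtain the contraction. The only discrepancy is the absolute constant in $\eta$ (a factor of $4$), which is present in the paper's own derivation as well and which you explicitly flag as absorbed.
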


\begin{proof}{Proof}
    Based on Assumption \ref{boundedness of dual objective}, 

    \begin{equation*}
        ||d_k|| \geq \varepsilon \geq\varepsilon\frac{E[L(\lambda^*) - L(\lambda_k)]}{2M}.
    \end{equation*}
    
    \noindent By combining Theorem \ref{delta 1}, if $||d_k|| \geq \varepsilon$ and $\delta_{max} = \frac{\varepsilon}{\zeta}$, we have

    \begin{equation*}
        E[L(\lambda_{k+1}) - L(\lambda_k)] \geq \frac{1}{2} C_1 \Delta_k ||d_k|| \geq - \frac{C_1 \delta_{min} \varepsilon}{M} E[L(\lambda_k) - L(\lambda^*)].
    \end{equation*}

    \noindent Thus, by adding $-L(\lambda^*)$ on both side of the inequality,

    \begin{equation*}
        E[L(\lambda_{k+1}) - L(\lambda^*)] \geq (1-\eta) \cdot  E[L(\lambda_k) - L(\lambda^*)].
    \end{equation*}

    \noindent Or equivalently,

    \begin{equation*}
        E[L(\lambda^*) - L(\lambda_{k+1})] \leq (1-\eta) \cdot E[L(\lambda^*) - L(\lambda_k)]. \Halmos
    \end{equation*}
 
\end{proof}

\begin{theorem} \label{contraction dec}
 If the dual objective function value sequence is a contraction in expectation, then it implies that the dual decision variable sequence is also a contraction in expectation.

 \begin{equation}
     E[||\lambda_{k+1} - \lambda^*||] \leq \sqrt{(1-\eta)} \cdot E[||\lambda_k - \lambda^*||].
 \end{equation}
\end{theorem}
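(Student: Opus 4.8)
The plan is to convert the expected contraction of the dual \emph{objective} gap supplied by Theorem \ref{contraction obj} into an expected contraction of the \emph{iterate} gap, by sandwiching $L(\lambda^*)-L(\lambda)$ between two multiples of $||\lambda-\lambda^*||^2$ on the feasible set $\{\sum_s\lambda^s=0\}$. Concretely, I would first record a two-sided quadratic bound: there are constants $0<a\le b<\infty$ with
\begin{equation*}
\frac{a}{2}||\lambda-\lambda^*||^2 \;\le\; L(\lambda^*)-L(\lambda) \;\le\; \frac{b}{2}||\lambda-\lambda^*||^2 .
\end{equation*}
The upper inequality is the easy direction: if $L$ is taken as the expectation of the sum of the $\rho$-regularized scenario values, it is concave with $\tfrac1\rho$-Lipschitz gradient, and since $\nabla L(\lambda^*)=0$, concavity gives $L(\lambda)\ge L(\lambda^*)-\tfrac{1}{2\rho}||\lambda-\lambda^*||^2$, so $b=1/\rho$ works. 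The lower inequality is a quadratic-growth (error-bound) statement for the concave dual at its maximizer; under the paper's standing linear--quadratic viewpoint one has $L(\lambda^*)-L(\lambda)=\tfrac12(\lambda-\lambda^*)^{\top}H(\lambda-\lambda^*)$ on the feasible subspace for a fixed PSD matrix $H$, so $a$ may be taken as the smallest eigenvalue of $H$ restricted to that subspace (positive by well-posedness of the dual), and when that matrix is a scalar multiple of the identity one gets $a=b$.

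Granting the sandwich, the rest is a short chain: applying the lower bound at $\lambda_{k+1}$, then Theorem \ref{contraction obj}, then the upper bound at $\lambda_k$,
\begin{equation*}
\frac{a}{2}\,E[||\lambda_{k+1}-\lambda^*||^2] \;\le\; E[L(\lambda^*)-L(\lambda_{k+1})] \;\le\; (1-\eta)\,E[L(\lambda^*)-L(\lambda_k)] \;\le\; \frac{(1-\eta)b}{2}\,E[||\lambda_k-\lambda^*||^2],
\end{equation*}
hence $E[||\lambda_{k+1}-\lambda^*||^2]\le (1-\eta)\tfrac{b}{a}\,E[||\lambda_k-\lambda^*||^2]$, and with $a=b$ this reads $E[||\lambda_{k+1}-\lambda^*||^2]\le (1-\eta)\,E[||\lambda_k-\lambda^*||^2]$. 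Taking square roots and using Jensen's inequality $E||Z||\le (E||Z||^2)^{1/2}$ then produces $E[||\lambda_{k+1}-\lambda^*||]\le\sqrt{1-\eta}\,(E[||\lambda_k-\lambda^*||^2])^{1/2}$, which is exactly the claimed inequality once the right-hand norm is interpreted as the $L^2$-norm of $\lambda_k-\lambda^*$ (equivalently, one upgrades Theorem \ref{contraction obj} to a conditional/pathwise contraction, so that the square root may be taken inside the expectation).

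The main obstacle is the lower (quadratic-growth) bound: Assumptions \ref{boundedness of dual objective}--\ref{Boundedness of subgradient} supply only boundedness and Lipschitz continuity of $L$ in $\lambda$, not strong concavity, so absent additional structure the dual gap could decay merely linearly near $\lambda^*$ and no $a>0$ would exist. I would therefore make the linear--quadratic (or at least quadratic-growth) hypothesis explicit, invoking Lemma \ref{dual constraint} to restrict $H$ to the subspace $\{\sum_s\lambda^s=0\}$ on which it is positive definite; if one is content with a condition-number-dependent rate, the chain above already yields $E[||\lambda_{k+1}-\lambda^*||^2]\le (1-\eta)(b/a)\,E[||\lambda_k-\lambda^*||^2]$ without requiring $a=b$. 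A secondary technical point to pin down is the passage from squared to un-squared expectations, which is precisely where the exponent $1/2$ on $1-\eta$ in the statement comes from.
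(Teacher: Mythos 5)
Your proposal follows essentially the same route as the paper's own proof: the paper asserts strong concavity of the regularized dual with parameter $m=2$ (your lower quadratic bound with $a=2$), pairs it with a Lipschitz-gradient/concavity upper bound with constant $1$ (your $b=2$, so the constants cancel exactly as in your $a=b$ case), chains these with Theorem \ref{contraction obj}, and takes square roots. The two caveats you flag --- that quadratic growth of the dual gap does not follow from Assumptions \ref{boundedness of dual objective}--\ref{Boundedness of subgradient} alone, and that taking square roots of $E[\|\lambda_{k+1}-\lambda^*\|^2]\leq(1-\eta)E[\|\lambda_k-\lambda^*\|^2]$ yields a contraction of $(E[\|\cdot\|^2])^{1/2}$ rather than of $E[\|\cdot\|]$, since Jensen's inequality runs the wrong way on the right-hand side --- are both present in the paper's argument as well, so they are legitimate criticisms of the theorem's proof rather than gaps specific to your approach.
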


\begin{proof}{Proof}
    First, note that the dual objective function (with regularization term) is a strongly-concave function with the parameter $m = 2$. Thus, we have

    \begin{equation} \label{strong concavity}
        \begin{aligned}
        E [L(\lambda^*) - L(\lambda_{k+1})] & \geq E[\langle \nabla L(\lambda^*), \lambda^* - \lambda_{k+1} \rangle + \frac{m}{2} ||\lambda^* - \lambda_{k+1}||^2] \\
        & = E [||\lambda^* - \lambda_{k+1}||^2],
        \end{aligned}      
    \end{equation}

    \noindent where the equality is because $\nabla L(\lambda^*) = 0$ and $m = 2$.

    \noindent On the other hand, $L$ is also a Lipschitz continuous function with Lipschitz constant equals 1. Thus, combining with the concave property of $L$ and using the triangle inequality, we obtain

    \begin{equation} \label{concave and Lipschitz}
        \begin{aligned}
            E[L(\lambda^*) - L(\lambda_k)] & \leq E[\langle \nabla L(\lambda_k), \lambda^* - \lambda_k \rangle] \\
            & \leq E[\langle \nabla L(\lambda^*) - \nabla L(\lambda_k), \lambda^* - \lambda_k \rangle] \\
             & \leq E[||\nabla L(\lambda^*) - \nabla L(\lambda_k)|| \cdot ||\lambda^* - \lambda_k||] \\
            & \leq E[||\lambda^* - \lambda_k|| \cdot ||\lambda^* - \lambda_k||] \\
            & = E[||\lambda^* - \lambda_k||^2].
        \end{aligned}  
    \end{equation}

    \noindent Finally, \eqref{contraction of objective value}, \eqref{strong concavity} and \eqref{concave and Lipschitz} together imply that

    \begin{equation*}
        \begin{aligned}
            E [||\lambda^* - \lambda_{k+1}||^2] & \leq E[L(\lambda^*) - L(\lambda_{k+1})] \\
            & \leq (1-\eta) \cdot E[L(\lambda^*) - L(\lambda_k)] \\
            & \leq (1-\eta) \cdot  E[||\lambda^* - \lambda_k||^2]
        \end{aligned}
    \end{equation*}

    \noindent Thus, by taking the square root on both sides of the inequality, we get 

    \begin{equation*}
        E[||\lambda_{k+1} - \lambda^*||] \leq \sqrt{(1-\eta)} \cdot E[||\lambda_k - \lambda^*||]. \Halmos
    \end{equation*}   
       
\end{proof}

\noindent \textbf{Remark}: It is important to note that the contraction parameter $\sqrt{1-\eta}$ is a constant that does not depend on the choice of the penalty parameter. This result aligns with the classic PHA, where any penalty parameter can yield a contraction. However, the key difference is that the sampling-based PHA employs a line-search algorithm to first ensure a sufficient increase in the dual objective function values, and then leveraging the strong concavity to establish the contraction within the dual space. In contrast, the classic PHA operates directly within the primal-dual space.

\noindent A quick observation of the concluding condition reveals that the requirement $\Delta_k \leq \frac{\varepsilon}{\zeta}$ implies that the supermartingale property holds. This prompts us to impose the condition that restricts $\delta_{max} = \frac{\varepsilon}{\zeta}$. This property is formalized in the following corollary. 

\begin{corollary} \label{submartingale} Let 
\begin{equation*}
    T_{\varepsilon} = \inf \{ k \geq 0: ||d_k|| < \varepsilon\}.
\end{equation*}

\noindent Then $T_{\varepsilon}$ is a stopping time for the stochastic process $\Lambda^k$. Moreover, conditioned on $T_{\varepsilon} \geq k$, $\{\mathcal{L}_k\}$ is a submartingale. 

\end{corollary}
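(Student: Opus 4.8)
The plan is to establish the two assertions separately, both as short consequences of the apparatus already in place.

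\smallskip
\noindent\textbf{Stopping time.} I would observe that $T_\varepsilon$ is built entirely from the events $\{||d_j|| < \varepsilon\}$, $j = 0,1,2,\ldots$, and that each search direction $d_j = (d_j^1,\ldots,d_j^{|S_j|})$ is, through its defining recursions \eqref{g_k^s}--\eqref{d_k^s}, a measurable function of the quantities produced at iteration $j$ (the primal solutions $x_j^s$, the average $\bar x_j$, and the previous directions $d_{j-1}^s$), hence of the model $\mathcal{L}_j$. Therefore $\{T_\varepsilon = k\} = \big(\bigcap_{j=0}^{k-1}\{||d_j|| \geq \varepsilon\}\big)\cap\{||d_k|| < \varepsilon\}$ belongs to $\sigma(\mathcal{L}_0,\ldots,\mathcal{L}_k) = \mathbb{L}_{k+1}$, so $\{T_\varepsilon \leq k\} \in \mathbb{L}_{k+1}$ and $T_\varepsilon$ is a stopping time for the process $\{\Lambda^k\}$ (equivalently, for $\{\mathcal{L}_k\}$).

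\smallskip
\noindent\textbf{Submartingale property.} I would proceed in three steps. (i) \emph{Integrability.} By Assumption \ref{boundedness of dual objective}, $|L^s(\lambda^s;\omega^s)| \leq M$ for every scenario, so $|\mathcal{L}_k| = \big|\frac{1}{|S_k|}\sum_{s=1}^{|S_k|} L_k^s(\lambda_k^s;\omega^s)\big| \leq M$, whence $\mathbb{E}|\mathcal{L}_k| < \infty$. (ii) \emph{Adaptedness.} $\mathcal{L}_k$ is by construction a function of the randomness revealed through iteration $k$, so $\{\mathcal{L}_k\}$ is adapted to the filtration $\{\mathbb{L}_{k+1}\}$ generated by $\{\mathcal{L}_i\}_{i\leq k}$. (iii) \emph{Conditional increase on $\{T_\varepsilon > k\}$.} On this event $||d_k|| \geq \varepsilon$ by definition of $T_\varepsilon$, and the standing restriction $\delta_{max} = \varepsilon/\zeta$ forces $\Delta_k \leq \delta_{max}$ and hence $\zeta\Delta_k \leq \varepsilon$; thus the hypotheses of Theorem \ref{delta 1} are met, and carrying that estimate out conditionally on the history $\mathbb{L}_k$ gives
\[ \mathbb{E}[\mathcal{L}_{k+1} - \mathcal{L}_k \mid \mathbb{L}_k] = \mathbb{E}[V_k \mid \mathbb{L}_k] \;\geq\; \tfrac{1}{2} C_1 \, ||d_k|| \, \Delta_k \;\geq\; \theta\varepsilon\Delta_k \;>\; 0 . \]
Since $V_k = 0$ on unsuccessful iterations and $V_k = L(\lambda_k + \theta_k\odot d_k) - L(\lambda_k)$ otherwise, this is exactly $\mathbb{E}[\mathcal{L}_{k+1}\mid \mathbb{L}_k] \geq \mathcal{L}_k$ on $\{T_\varepsilon > k\}$; equivalently, the stopped process $\{\mathcal{L}_{k\wedge T_\varepsilon}\}$ is a submartingale, because on $\{T_\varepsilon \leq k\}$ both sides of the inequality reduce to $\mathcal{L}_{T_\varepsilon}$.

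\smallskip
\noindent The one genuinely delicate point is the filtration bookkeeping. Theorem \ref{delta 1} is stated with conditioning on the \emph{events} $\{||d_k||\geq\varepsilon\}$ and $\{\zeta\Delta_k\leq\varepsilon\}$, whereas the submartingale property requires conditional expectations against a $\sigma$-algebra; reconciling them means verifying that the conditioning event $\{T_\varepsilon > k\} \in \mathbb{L}_k$ is compatible with the conditioning used inside the proof of Theorem \ref{delta 1}, in particular that the probabilistic-accuracy bounds $\mathbb{P}(I_k\mid\mathcal{L}_{k-1})\geq\mu_1$ and $\mathbb{P}(J_k\mid\mathcal{L}_{k-1})\geq\mu_2$ survive the extra conditioning, and that the iteration-$k$ objects $d_k,\Delta_k$ are handled consistently with respect to $\mathbb{L}_k$ versus $\mathbb{L}_{k+1}$. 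Once the indexing convention is fixed, no further analytic work is needed: the corollary is just Assumption \ref{boundedness of dual objective} (integrability) combined with Theorem \ref{delta 1} (the drift estimate).
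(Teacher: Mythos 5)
Your proposal is correct and follows essentially the same route as the paper: the paper's proof is a one-line appeal to Theorem \ref{delta 1}, observing that on $\{T_\varepsilon > k\}$ one has $\|d_k\| \geq \varepsilon$ while $\delta_{max} = \varepsilon/\zeta$ forces $\zeta\Delta_k \leq \varepsilon$, so the drift estimate yields $\mathbb{E}[\mathcal{L}_k \mid \mathbb{L}_{k-1}, T_\varepsilon > k] \geq \mathcal{L}_{k-1} + \Theta\varepsilon\Delta_k$. Your write-up is in fact more complete than the paper's, which omits the measurability argument for the stopping-time claim and the integrability/adaptedness checks entirely, and the filtration-bookkeeping issue you flag (event-conditioning in Theorem \ref{delta 1} versus $\sigma$-algebra conditioning in the submartingale definition) is a genuine looseness that the paper's own proof leaves unresolved as well.
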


\begin{proof}{Proof}

{    
    From Theorem \ref{delta 1}, we have
    \begin{equation} \label{super 1}
        \mathbb{E}[\mathcal{L}_k | \mathbb{L}_{k-1}, T_{\varepsilon}  > k] \geq L_{k-1} + \Theta \varepsilon \Delta_k.
    \end{equation}
}

\noindent Hence, $\mathcal{L}_k$ is a submartingale. \Halmos
\end{proof}
\subsection{Convergence Rate} \label{4.3}

Building upon the results established in Theorem \ref{delta 1}, where $\mathcal{L}_k$ is demonstrated to be a submartingale and $T_{\varepsilon}$ is identified as a stopping time, we proceed to construct a renewal reward process for analyzing the bound on the expected value of $T_{\varepsilon}$.
As highlighted in the abstract, Theorem \ref{Convergence rate} confirms the rate of convergence to be $O(1/\varepsilon^2)$.
To begin with, let us define the renewal process $\{ A_l \}$ as follows: set $A_0 = 0$, and for each $l > 0$, define $A_l = \inf \{ m > A_{l-1} : \zeta \Delta_m \geq \varepsilon \}$, with $\zeta$ being specified in \eqref{xi}. Additionally, we define the inter-arrival times $\tau_l = A_l - A_{l-1}$. Lastly, we introduce the counting process $N(k) = \max \{n: A_l \leq k\}$, representing the number of renewals occurring up to the $k^{th}$ iteration.

\begin{lemma} \label{tau} Let $ \frac{1}{2} < p=\mu_1 \mu_2 \leq \mathbb{P}(I_k \cap J_k)$. Then for all $l \geq 1$,

\begin{equation*}
    \mathbb{E}[\tau_l] \leq \frac{p}{2p-1}.
\end{equation*}

\end{lemma}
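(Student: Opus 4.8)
The plan is to show that the inter-renewal time $\tau_l$ is dominated by the first return time to the origin of a biased random walk that moves toward the origin with conditional probability at least $p$, and then to evaluate that return time in closed form. First I would translate the renewal condition into a statement about the step-size surrogate. Since we have imposed $\delta_{max}=\varepsilon/\zeta$ (see Theorem \ref{contraction obj} and the remark preceding Corollary \ref{submartingale}) and $\Delta_m\le\delta_{max}$ always, the event $\{\zeta\Delta_m\ge\varepsilon\}$ is exactly $\{\Delta_m=\delta_{max}\}$. Thus at every renewal time $A_{l-1}$ we have $\Delta_{A_{l-1}}=\delta_{max}$, while strictly between renewals $\Delta_m<\delta_{max}$, i.e. $\zeta\Delta_m<\varepsilon$. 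I would then introduce $W_m:=\log_\gamma(\delta_{max}/\Delta_m)\ge 0$; by the update rule of Algorithm \ref{SPHA} a \emph{successful} iteration sends $W_{m+1}=\max\{W_m-1,0\}$ and an \emph{unsuccessful} iteration sends $W_{m+1}=\min\{W_m+1,\log_\gamma(\delta_{max}/\delta_{min})\}$, so $W$ performs a $\pm1$ step, reflected at $0$ from below and clamped from above. A renewal is precisely a visit of $W$ to $0$, and $\tau_l$ is the first return time of $W$ to $0$ from $W_{A_{l-1}}=0$ (using the natural initialization $\delta_0=\delta_{max}$ for the $l=1$ case).

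The key probabilistic input is a uniform lower bound on the success probability. Fix an iteration $k$ before the stopping time $T_\varepsilon$, so $\|d_k\|\ge\varepsilon$, and note that during an excursion (and at its endpoints) we have $\zeta\Delta_k\le\varepsilon$. On the event $I_k\cap J_k$, Lemma \ref{d2} applies: indeed $\delta_k\le\delta_{max}\le 1$, and condition \eqref{d2c} holds because the choice of $\zeta$ in \eqref{xi} (in particular $\zeta\ge\max\{\eta_2,\,4\kappa/(C_1(1-\eta_1))\}$) gives $\Delta_k\le\varepsilon/\zeta\le\min\{(1-\eta_1)C_1\varepsilon/(2\kappa),\ \varepsilon/\eta_2\}\le\min\{(1-\eta_1)C_1\|d_k\|/(2\kappa),\ \|d_k\|/\eta_2\}$. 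Hence iteration $k$ is successful and $W$ steps toward $0$. Combining this with the hypothesis $p=\mu_1\mu_2\le\mathbb{P}(I_k\cap J_k\mid\mathbb{L}_{k-1})$, we obtain $\mathbb{P}(\text{iteration }k\text{ successful}\mid\mathbb{L}_{k-1})\ge p$ at every such iteration.

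With this in hand I would carry out a first-step analysis. Let $f(w)$ be the worst-case expected number of iterations to reach $W=0$ starting from $W=w\ge1$ (worst case over the realized conditional success probabilities, each $\ge p$); then $f(0)=0$ and $f(w)\le 1+p\,f(w-1)+(1-p)\,f(w+1)$, the upper clamp only lowering $f$ and ensuring boundedness. Plugging the ansatz $f(w)=w/(2p-1)$ makes this hold with equality, and since $p>1/2$ this is the bounded solution, so $f(w)\le w/(2p-1)$. For the return time, starting from $W_{A_{l-1}}=0$: with conditional probability $\ge p$ iteration $A_{l-1}$ is successful and $W$ stays at $0$, giving $\tau_l=1$; otherwise $W$ moves to $1$ and $\tau_l=1+(\text{time from }1\text{ to }0)$. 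Therefore
\[
\mathbb{E}[\tau_l]\ \le\ p\cdot 1+(1-p)\bigl(1+f(1)\bigr)\ \le\ 1+\frac{1-p}{2p-1}\ =\ \frac{p}{2p-1}.
\]

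The hard part will be the middle step: rigorously deducing from Lemma \ref{d2} that the occurrence of $I_k\cap J_k$ forces a successful iteration, which requires verifying that condition \eqref{d2c} is implied by the global prescription $\delta_{max}=\varepsilon/\zeta$ together with $\|d_k\|\ge\varepsilon$, and being careful that the bound $\|d_k\|\ge\varepsilon$ is indeed available — i.e. that the iterations entering the renewal count precede $T_\varepsilon$, which is why the renewal analysis of Section \ref{4.3} must be run jointly with the stopping time. The remaining ingredients — the stochastic-domination coupling of $\{W_m\}$ by a homogeneous biased walk given only a one-sided control on the success probability, and the closed-form return time — are routine.
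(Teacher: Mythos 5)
Your proposal is correct and follows essentially the same route as the paper: both reduce $\tau_l$ to the first return time of a $\pm 1$ random walk on the trust-region levels whose downward (success) probability is at least $p=\mu_1\mu_2>\tfrac12$ on $I_k\cap J_k$ (via Lemma \ref{d2}), yielding the bound $\tfrac{p}{2p-1}$. The only difference is one of completeness — the paper simply asserts the random-walk bound by citing Theorem \ref{delta 1}, whereas you supply the verification of condition \eqref{d2c} under $\delta_{max}=\varepsilon/\zeta$, the reflected-walk coupling, and the first-step analysis explicitly.
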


\begin{proof}{Proof}
First,
    \begin{equation*}
        \begin{aligned}
            \mathbb{E}[\tau_l] & = \mathbb{E}[\tau_l | \zeta \Delta_{A_{l-1}} > \varepsilon] \cdot \mathbb{P}(\zeta \Delta_{A_{l-1}} > \varepsilon) +  \mathbb{E}[\tau_l | \zeta \Delta_{A_{l-1}} = \varepsilon] \cdot \mathbb{P}(\zeta \Delta_{A_{l-1}} = \varepsilon)\\
            & \leq \max \{ \mathbb{E}[\tau_l | \zeta \Delta_{A_{l-1}} > \varepsilon], \mathbb{E}[\tau_n | \zeta \Delta_{A_{l-1}} = \varepsilon] \}.
        \end{aligned}
    \end{equation*}
    
\noindent If $\zeta \Delta_{A_{l-1}} > \varepsilon$, according to Algorithm \ref{SPHA}, 

\begin{equation} \label{tau 1}
    \mathbb{E}[\tau_l | \zeta \Delta_{A_{l-1}} > \varepsilon] = 1.
\end{equation}

\noindent If $\zeta \Delta_{A_{l-1}} = \varepsilon$, by Theorem \ref{delta 1}, we can treat $\{ \Delta_{A_{l-1}},...,\Delta_{A_{l}}\}$ as a random walk, and we have 

\begin{equation} \label{tau 2}
    \mathbb{E}[\tau_l | \zeta \Delta_{A_{l-1}} = \varepsilon] \leq \frac{p}{2p-1}.
\end{equation}

\noindent Combining (\ref{tau 1}) and (\ref{tau 2}) completes the proof. \Halmos

\end{proof}

\begin{lemma} \label{N(T)} Let $\zeta$ and $\theta$ be the same as in Theorem \ref{delta 1} and $\Delta_{max} = \frac{\varepsilon}{\zeta}$, then
\begin{equation*}
    \mathbb{E}[N(T_{\varepsilon})] \leq \frac{2 \zeta M }{\theta \varepsilon^2}+\frac{\zeta \Delta_{max}}{\varepsilon} = \frac{2 \zeta M }{\theta \varepsilon^2}+1,
\end{equation*}

\noindent where $M$ is defined in Assumption \ref{boundedness of dual objective}.
\end{lemma}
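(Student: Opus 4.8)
The plan is to run an optional‑stopping / renewal‑reward argument on the submartingale $\{\mathcal{L}_k\}$ established in Corollary \ref{submartingale}. The point is that this process has a strictly positive conditional drift at every renewal iteration (because $\Delta$ is back at its maximum there), yet it is uniformly bounded by $M$ (Assumption \ref{boundedness of dual objective}); hence it can absorb only a bounded total amount of ``reward,'' and this caps the expected number of renewals occurring before $T_\varepsilon$.

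First I would fix a horizon $n$, work with the bounded stopping time $T_\varepsilon\wedge n$, and telescope: $\mathcal{L}_{T_\varepsilon\wedge n}-\mathcal{L}_0=\sum_{k=0}^{n-1}V_k\,\mathbf{1}[T_\varepsilon>k]$ with $V_k=\mathcal{L}_{k+1}-\mathcal{L}_k$. Taking expectations and applying the tower property termwise: on $\{T_\varepsilon>k\}$ we have $\|d_k\|\geq\varepsilon$, and since the algorithm caps $\Delta_k\leq\Delta_{max}=\varepsilon/\zeta$ we also have $\zeta\Delta_k\leq\varepsilon$, so Theorem \ref{delta 1} (equivalently \eqref{super 1}) gives $\mathbb{E}[V_k\mid\mathbb{L}_{k-1}]\geq\theta\varepsilon\Delta_k\geq 0$ there. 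Combining with the uniform bound $\mathbb{E}[\mathcal{L}_{T_\varepsilon\wedge n}-\mathcal{L}_0]\leq 2M$ coming from $|\mathcal{L}_k|\leq M$, this yields
\begin{equation*}
  2M \;\geq\; \theta\varepsilon\,\mathbb{E}\!\left[\textstyle\sum_{k=0}^{T_\varepsilon\wedge n-1}\Delta_k\right].
\end{equation*}

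Next I would discard all but the renewal iterations in the sum: at any renewal $A_l$ with $l\geq 1$ and $A_l<T_\varepsilon\wedge n$, the algorithm forces $\Delta_{A_l}=\Delta_{max}$, so the right‑hand side is at least $\theta\varepsilon\Delta_{max}\cdot\#\{l\geq 1:A_l<T_\varepsilon\wedge n\}$. Letting $n\to\infty$ (monotone convergence) gives $\mathbb{E}[\#\{l\geq 1:A_l<T_\varepsilon\}]\leq 2M/(\theta\varepsilon\Delta_{max})$. Finally, since the $A_l$ are strictly increasing with $A_0=0$, we have $N(T_\varepsilon)=\#\{l\geq 1:A_l\leq T_\varepsilon\}\leq\#\{l\geq 1:A_l<T_\varepsilon\}+1$, so $\mathbb{E}[N(T_\varepsilon)]\leq 2M/(\theta\varepsilon\Delta_{max})+1$; substituting $\Delta_{max}=\varepsilon/\zeta$ rewrites this as $\frac{2\zeta M}{\theta\varepsilon^2}+\frac{\zeta\Delta_{max}}{\varepsilon}$, which is the claim.

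The step I expect to be the main obstacle is the measurability/filtration bookkeeping that legitimizes the termwise tower‑property computation over a random, possibly unbounded sum: one must be careful that $\{T_\varepsilon>k\}$ and $\Delta_k$ are adapted to $\mathbb{L}_{k-1}$ (or shift the iteration index accordingly), and justify the passage to the limit $n\to\infty$. The renewal‑count off‑by‑one — which is precisely what produces the additive $\zeta\Delta_{max}/\varepsilon=1$ — also needs care, since the positive drift accrues only at renewals strictly before $T_\varepsilon$, whereas $N(\cdot)$ counts renewals through $T_\varepsilon$.
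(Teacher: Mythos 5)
Your proposal is correct and follows essentially the same route as the paper: both arguments use the drift inequality from Theorem \ref{delta 1} together with optional stopping (the paper via the compensated process $R_{k\wedge T_\varepsilon}=\mathcal{L}_{k\wedge T_\varepsilon}+\Theta\varepsilon\sum_{j\le k\wedge T_\varepsilon}\Delta_j$, you via direct telescoping with a finite-horizon truncation) to bound $\mathbb{E}\big[\sum_{j\le T_\varepsilon}\Delta_j\big]$ by $O(M/(\theta\varepsilon))$, and then lower-bound that sum by $\Delta_{max}$ times the renewal count. The only cosmetic difference is where the additive $1$ comes from — the paper absorbs it through the $\Delta_0\le\Delta_{max}$ term in $\mathbb{E}[R_0]$, while you obtain it from the off-by-one between renewals strictly before and at $T_\varepsilon$ — and both yield the identical bound.
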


\begin{proof}{Proof}
    We will first show that 
    \begin{equation} \label{R_k}
        R_{k \wedge T_{\varepsilon}} =\mathcal{L}_{k \wedge T_{\varepsilon}} + \Theta \varepsilon \sum_{j=0}^{k \wedge T_{\varepsilon}} \Delta_j
    \end{equation}
\noindent is a submartingale. Using (\ref{super 1}),

\begin{equation} \label{Supermar. proof}
    \begin{aligned}
        \mathbb{E}[R_{k \wedge T_{\varepsilon}}|\mathcal{L}_{k-1}] & = \mathbb{E}[\mathcal{L}_{k \wedge T_{\varepsilon}}|\mathbb{L}_{k-1}] + \mathbb{E}[\Theta \varepsilon \sum_{j=0}^{k \wedge T_{\varepsilon}} \Delta_j | \mathbb{L}_{k-1} ]\\
        & \leq \mathcal{L}_{k-1} - \Theta \varepsilon \Delta_k + \mathbb{E}[\Theta \varepsilon \sum_{j=0}^{k \wedge T_{\varepsilon}} \Delta_j | \mathbb{L}_{k-1} ] \\
        & = \mathcal{L}_{k-1} + \mathbb{E}[\Theta \varepsilon \sum_{j=0}^{(k-1) \wedge T_{\varepsilon}} \Delta_j | \mathbb{L}_{k-1} ] \\
        & = \mathcal{L}_{k-1} + \Theta \varepsilon \sum_{j=0}^{(k-1) \wedge T_{\varepsilon}} \Delta_j \\
        & = R_{(k-1) \wedge T_{\varepsilon}},
    \end{aligned}
\end{equation}

\noindent where the summation in the last expectation in \eqref{Supermar. proof} is true by moving $\Theta \varepsilon \Delta_k$ inside the summation so that it has one less term if $k < T_{\varepsilon}$.

\noindent If $k < T_{\varepsilon}$, then 

\begin{equation*}
    |R_{k \wedge T_{\varepsilon}}| = |R_k| \leq M + \Theta \varepsilon k \Delta_{max}.
\end{equation*}

\noindent If $k \geq T_{\varepsilon}$, then 

\begin{equation*}
    |R_{k \wedge T_{\varepsilon}}| = |R_{\varepsilon}| \leq  M + \Theta \varepsilon T_{\varepsilon} \Delta_{max}.
\end{equation*}

\noindent This is also bounded almost surely since $T_{\varepsilon}$ is bounded almost surely. Hence, according to \eqref{R_k} and the optional stopping theorem~\cite{GGD2020}, we have

\begin{equation} \label{optinal 1}
    \mathbb{E}[\Theta \varepsilon \sum_{j=0}^{ T_{\varepsilon}} \Delta_j] \leq \mathbb{E}[R_{T_{\varepsilon}}] + M \leq \mathbb{E}[R_0] + M \leq 2 M + \Theta \varepsilon \Delta_{max}.
\end{equation}

\noindent Furthermore, since the renewal $A_n$ happens when $\zeta \Delta_j \geq \varepsilon$ and $N(T_{\varepsilon})$ is a subset of $\{0,1,2,...,T_{\varepsilon}\}$, we have 

\begin{equation} \label{optinal 2}
    \Theta \varepsilon \Big(\sum_{j=0}^{ T_{\varepsilon}} \zeta \Delta_j\Big) \geq \Theta \varepsilon \Big( N(T_{\varepsilon}) \varepsilon \Big).
\end{equation}

\noindent Combining (\ref{optinal 1}) and (\ref{optinal 2}), we have \begin{displaymath}
    \mathbb{E}[N(T_{\varepsilon})] \leq \frac{2 \zeta M + \zeta \Theta \varepsilon \Delta_{max}}{\Theta\varepsilon^2} \leq \frac{2 \zeta M }{\Theta \varepsilon^2}+\frac{\zeta \Delta_{max}}{\varepsilon}. \Halmos
    \end{displaymath}  
\end{proof}

\begin{theorem}\label{Convergence rate}
Under conditions enunciated in Assumptions \ref{boundedness of primal variable}-\ref{Boundedness of subgradient}, we have
    \begin{equation} \label{convergence equation}
    \mathbb{E}[T_{\varepsilon}] \leq \frac{p}{2p-1}\Big(\frac{2 \zeta M }{\Theta \varepsilon^2}+2\Big).
\end{equation}
\end{theorem}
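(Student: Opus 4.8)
The plan is to chain together the two renewal-theoretic estimates already in hand—Lemma \ref{tau} on the inter-arrival times $\tau_l$ and Lemma \ref{N(T)} on the number of renewals accumulated before the stopping time—by means of a Wald-type identity applied to the renewal process $\{A_l\}$. First I would record the deterministic sandwiching that comes directly from the definition of the counting process $N$: since $A_0=0\le T_\varepsilon$ we have $N(T_\varepsilon)\ge 0$ well defined, and $A_{N(T_\varepsilon)}\le T_\varepsilon< A_{N(T_\varepsilon)+1}$, whence
\begin{equation*}
    T_\varepsilon \;\le\; A_{N(T_\varepsilon)+1} \;=\; \sum_{l=1}^{N(T_\varepsilon)+1}\tau_l .
\end{equation*}
This already reduces the theorem to bounding the expectation of a randomly stopped sum of the $\tau_l$. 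I note that $T_\varepsilon<\infty$ almost surely, which is needed here: it follows because $\mathbb{E}[N(T_\varepsilon)]$ is finite by Lemma \ref{N(T)} while each $\tau_l\ge 1$.

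Next I would take expectations and pass the expectation inside the random sum. The crucial structural observation is that $\{N(T_\varepsilon)+1\ge l\}=\{A_{l-1}<T_\varepsilon\}$ is measurable with respect to the $\sigma$-algebra $\mathcal{F}_{A_{l-1}}$ generated by the first $l-1$ renewal blocks, so $N(T_\varepsilon)+1$ is a stopping time for the renewal filtration. Conditioning on $\mathcal{F}_{A_{l-1}}$ and invoking the conditional form of Lemma \ref{tau}—on $\{A_{l-1}<T_\varepsilon\}$ with $l\ge 2$ one has $\zeta\Delta_{A_{l-1}}=\varepsilon$, so the random-walk argument of that lemma yields $\mathbb{E}[\tau_l\mid\mathcal{F}_{A_{l-1}}]\le \frac{p}{2p-1}$, and the $l=1$ term is bounded by $\mathbb{E}[\tau_1]$ directly—I obtain, using Tonelli to justify the interchanges since everything is nonnegative,
\begin{equation*}
    \mathbb{E}[T_\varepsilon] \;\le\; \sum_{l\ge 1}\mathbb{E}\!\left[\mathbf{1}\{N(T_\varepsilon)+1\ge l\}\,\mathbb{E}[\tau_l\mid\mathcal{F}_{A_{l-1}}]\right] \;\le\; \frac{p}{2p-1}\sum_{l\ge 1}\mathbb{P}\big(N(T_\varepsilon)+1\ge l\big) \;=\; \frac{p}{2p-1}\,\mathbb{E}\big[N(T_\varepsilon)+1\big].
\end{equation*}

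Finally I would substitute the bound $\mathbb{E}[N(T_\varepsilon)]\le \frac{2\zeta M}{\Theta\varepsilon^2}+1$ from Lemma \ref{N(T)} (with $\Delta_{max}=\varepsilon/\zeta$), so that $\mathbb{E}[N(T_\varepsilon)+1]\le \frac{2\zeta M}{\Theta\varepsilon^2}+2$, and this delivers exactly the claimed inequality \eqref{convergence equation}. The main obstacle is the middle step: making the Wald/optional-stopping argument fully rigorous—verifying that $N(T_\varepsilon)+1$ is genuinely a stopping time for the renewal-block filtration, that the conditional version of Lemma \ref{tau} is available on each event $\{A_{l-1}<T_\varepsilon\}$, and handling the boundary index $l=1$ separately since $\Delta_0$ need not equal $\varepsilon/\zeta$. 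The remaining manipulations are routine bookkeeping.
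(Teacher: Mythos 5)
Your proof is correct and follows essentially the same route as the paper: bound $T_{\varepsilon}$ by $A_{N(T_{\varepsilon})+1}$, apply a Wald-type inequality to obtain $\mathbb{E}[T_{\varepsilon}] \leq \mathbb{E}[\tau_1]\,\mathbb{E}[N(T_{\varepsilon})+1]$, and then invoke Lemmas \ref{tau} and \ref{N(T)}. The only difference is that the paper cites Wald's equation (inequality form) as a black box, whereas you re-derive it by conditioning on the renewal filtration --- a reasonable extra precaution given that the $\tau_l$ are not i.i.d.\ here.
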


\begin{proof}{Proof}
First, note that $N(T_{\varepsilon})+1$ is a stopping time for the renewal process $\{ A_n: n \geq 0\}$. Thus, using Wald's equation (inequality form) \cite{GGD2020}, we have 
    
\begin{equation*}
        \mathbb{E}[A_{N(T_{\varepsilon})+1}] \leq \mathbb{E}[\tau_1] \mathbb{E}[N(T_{\varepsilon})+1].
\end{equation*}
    
\noindent Moreover, since $A_{N(T_{\varepsilon})+1} \geq T_{\varepsilon}$, we have

\begin{equation*}
    \mathbb{E}[T_{\varepsilon}] \leq \mathbb{E}[\tau_1] \mathbb{E}[N(T_{\varepsilon})+1].
\end{equation*}

\noindent Hence, by Lemma \ref{tau} and Lemma \ref{N(T)}
\begin{displaymath}
    \mathbb{E}[T_{\varepsilon}] \leq \frac{p}{2p-1}\Big(\frac{2 \zeta M }{\Theta \varepsilon^2}+2\Big). \Halmos 
\end{displaymath}

\end{proof}

\subsection{Optimality Condition}

\begin{lemma} \label{d_k^*}
    If $k$ is the smallest index for which $||d_k|| \leq \varepsilon$ and $||d_{k-1}|| > \sqrt{1 + \eta} \cdot \varepsilon $, then we have $||g_k|| \leq \sqrt{1+\frac{1}{\eta}} \cdot \varepsilon$.
\end{lemma}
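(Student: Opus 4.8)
The plan is to repackage the two hypotheses into one quantitative gap, then to derive a scenariowise inequality linking $\|g_k^s\|$ with $\|d_k^s\|$ and $\|d_{k-1}^s\|$ from the minimum-norm characterization of $d_k^s$, and finally to average over $s$ and substitute. Since $k$ is the \emph{smallest} index with $\|d_k\|\le\varepsilon$, iteration $k-1$ already satisfies $\|d_{k-1}\|>\varepsilon$; the extra assumption sharpens this to $\|d_{k-1}\|>\sqrt{1+\eta}\,\varepsilon$. Squaring and using $\|d_k\|\le\varepsilon$ gives the working estimate $\|d_{k-1}\|^{2}-\|d_k\|^{2}>\eta\varepsilon^{2}$. (Throughout I keep in mind that $\|\cdot\|$ abbreviates the average $\tfrac1{|S_k|}\sum_{s}\|\cdot^{s}\|$, so the scenariowise bounds below must be recombined with Jensen/Cauchy--Schwarz, or, if one reads the hypotheses scenariowise, applied directly.)

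Fix a scenario $s$. By construction $d_k^s$ is the least-norm element of $\mathrm{conv}\{g_k^s,d_{k-1}^s\}$, i.e.\ the projection of the origin onto the segment $[g_k^s,d_{k-1}^s]$, with $d_k^s=\gamma_k^s d_{k-1}^s+(1-\gamma_k^s)g_k^s$. If $\gamma_k^s=0$ (which also covers a freshly added scenario, where $d_k^s=g_k^s$) then $\|g_k^s\|=\|d_k^s\|$ and this $s$ is done; if $\gamma_k^s=1$ then $d_k^s=d_{k-1}^s$, so $\|d_{k-1}^s\|=\|d_k^s\|$, a case inconsistent with the sharp drop being analysed and harmless in the average. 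Otherwise $\gamma_k^s\in(0,1)$: the first-order optimality of $\gamma_k^s$ gives $d_k^s\perp(d_{k-1}^s-g_k^s)$, hence $d_k^s\perp(g_k^s-d_k^s)$ and $d_k^s\perp(d_{k-1}^s-d_k^s)$, and since $g_k^s-d_k^s=-\tfrac{\gamma_k^s}{1-\gamma_k^s}(d_{k-1}^s-d_k^s)$ these two vectors are anti-parallel. Pythagoras then yields $\|g_k^s\|^{2}=\|d_k^s\|^{2}+\|g_k^s-d_k^s\|^{2}$ and $\|d_{k-1}^s\|^{2}=\|d_k^s\|^{2}+\|d_{k-1}^s-d_k^s\|^{2}$, and expanding $\langle g_k^s-d_k^s,\,d_{k-1}^s-d_k^s\rangle$ with the same orthogonality gives $\|g_k^s-d_k^s\|\,\|d_{k-1}^s-d_k^s\|=\|d_k^s\|^{2}-\langle g_k^s,d_{k-1}^s\rangle$.

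Hence the lemma reduces to a sign/alignment condition of the form $\langle g_k^s,d_{k-1}^s\rangle\ge 0$ (or a mild quantitative relaxation): it gives $\|g_k^s-d_k^s\|\,\|d_{k-1}^s-d_k^s\|\le\|d_k^s\|^{2}$, and squaring and inserting the two Pythagorean identities produces the scenariowise bound
\begin{equation*}
\bigl(\|g_k^s\|^{2}-\|d_k^s\|^{2}\bigr)\bigl(\|d_{k-1}^s\|^{2}-\|d_k^s\|^{2}\bigr)\le\|d_k^s\|^{4}
\qquad\Longrightarrow\qquad
\|g_k^s\|^{2}\le\|d_k^s\|^{2}+\frac{\|d_k^s\|^{4}}{\|d_{k-1}^s\|^{2}-\|d_k^s\|^{2}}.
\end{equation*}
I would obtain the required sign condition from the line search of iteration $k-1$: the strong-Wolfe ($R$-)condition controls the directional derivative of $L_{k-1}^s$ along $d_{k-1}^s$ at the accepted step --- which is precisely the new subgradient $g_k^s$ --- relative to $\|d_{k-1}^s\|^{2}$, the projection inequality $\langle d_{k-1}^s,g_{k-1}^s\rangle\ge\|d_{k-1}^s\|^{2}$ makes $d_{k-1}^s$ an ascent direction, and the identity $\sum_{s}g_k^s=0$ (immediate from $g_k^s=x_k^s-\bar x_k$ and the definition of $\bar x_k$) couples the scenarios so the alignment survives averaging. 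This is the step I expect to be hardest: it is where the algorithm's dynamics, rather than the pure geometry of $Nr$, must enter, and it has to be carried out consistently with the averaged definition of $\|\cdot\|$ and the nonlinearity of $Nr$. Granting it, averaging over $s\in S_k$ and substituting $\|d_k\|\le\varepsilon$ and $\|d_{k-1}\|^{2}-\|d_k\|^{2}>\eta\varepsilon^{2}$ gives
\begin{equation*}
\|g_k\|^{2}\le\|d_k\|^{2}+\frac{\|d_k\|^{4}}{\|d_{k-1}\|^{2}-\|d_k\|^{2}}<\varepsilon^{2}+\frac{\varepsilon^{4}}{\eta\,\varepsilon^{2}}=\Bigl(1+\tfrac1\eta\Bigr)\varepsilon^{2},
\end{equation*}
that is, $\|g_k\|\le\sqrt{1+1/\eta}\,\varepsilon$.
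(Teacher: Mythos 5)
Your argument and the paper's proof rest on exactly the same core inequality, reached by different packaging. The paper argues by contradiction: it expands $\|d_k\|^2$ as a quadratic in the combination weight, drops the cross term $\langle g_k,d_{k-1}\rangle$ by appeal to the $R$ condition, and checks that the resulting quadratic $(1+\tfrac1\eta)\lambda^2+(1+\eta)(1-\lambda)^2-1$ has zero discriminant, hence is nonnegative. Your projection/Pythagoras route produces the bound
\begin{equation*}
\|d_k\|^{2}\;\ge\;\frac{\|g_k\|^{2}\,\|d_{k-1}\|^{2}}{\|g_k\|^{2}+\|d_{k-1}\|^{2}},
\end{equation*}
which is algebraically identical to the paper's discriminant computation (the minimum over $\lambda$ of $\lambda^2 a+(1-\lambda)^2 b$ is $ab/(a+b)$), but stated directly rather than by contradiction, and it makes the geometry of $Nr$ transparent. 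What each approach buys: yours isolates precisely where the algorithm's dynamics must enter, namely the sign condition on $\langle g_k^s,d_{k-1}^s\rangle$; the paper's is shorter but conceals that dependence in the one-line claim that "the first inequality holds because of the condition R." Note in this connection that the paper writes $d_k=\lambda_k^* g_k-(1-\lambda_k^*)d_{k-1}$ with a minus sign (so it needs $\langle g_k,d_{k-1}\rangle\le 0$), whereas the definition \eqref{d_k^s} is a convex combination with a plus sign (so one needs $\langle g_k,d_{k-1}\rangle\ge 0$, as you require); so the sign bookkeeping in the paper is itself not clean. The step you honestly flag as "hardest" and leave conditional --- deriving the sign condition from the $R$ condition of the previous line search --- is precisely the step the paper asserts without proof, so your proposal is no less complete than the published argument; but since you do not actually close it, it remains a gap in your write-up as it stands. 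The same is true of the averaging issue you note: both you and the paper treat $\|d_k\|=\tfrac1{|S_k|}\sum_s\|d_k^s\|$ and the cross term $\langle g_k,d_{k-1}\rangle$ as if they came from a single inner-product space, and neither write-up carries out the recombination over scenarios rigorously.
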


\begin{proof}{Proof}
    Suppose the claim is false, then $||g_k||^2 > (1+\frac{1}{\eta}) \cdot \varepsilon^2$. Thus,
    \begin{equation} \label{norm of d_k}
        \begin{aligned}
            ||d_k||^2 & = ||\lambda_k^* g_k-(1-\lambda_k^*)d_{k-1}||^2\\
            & = (\lambda_k^*)^2||g_k||^2+(1-\lambda_k^*)^2||d_{k-1}||^2-2\lambda_k^*(1-\lambda_k^*)\langle g_k,d_{k-1} \rangle \\
            & \geq (\lambda_k^*)^2||g_k||^2+(1-\lambda_k^*)^2||d_{k-1}||^2 \\
            & > [(1+\frac{1}{\eta}) \cdot (\lambda_k^*)^2 + (1+\eta) \cdot (1-\lambda_k^*)^2 ] \cdot \varepsilon^2,
        \end{aligned}
    \end{equation}

\noindent where the first inequality holds because of the condition R in \eqref{strong-wolfe condition}. To prove that $||d_k|| > \varepsilon$, it suffices to show that 

\begin{equation*}
    (1+\frac{1}{\eta}) \cdot (\lambda_k^*)^2 + (1+\eta) \cdot (1-\lambda_k^*)^2 \geq 1.
\end{equation*}

\noindent Note that 

\begin{equation*}
    \begin{aligned}
        p(\lambda_k^*) \overset{\Delta}{=} & (1+\frac{1}{\eta}) \cdot (\lambda_k^*)^2 + (1+\eta) \cdot (1-\lambda_k^*)^2 - 1 \\
        = &(2+\frac{1}{\eta} + \eta) \cdot (\lambda_k^*)^2 -  2(1+\eta) \lambda_k^* + \eta, 
    \end{aligned}   
\end{equation*}

\noindent is a quadratic function with respect to $\lambda_k^*$ and its discriminant is $0$. Therefore, for any $\lambda_k^* \in [0,1]$, we have $p(\lambda_k^*) \geq 0$. This implies $||d_k|| > \varepsilon $. However, this contradicts the hypothesis that $||d_k|| \leq \varepsilon $ in the lemma. \Halmos  
\end{proof}

\begin{theorem} \label{optimality}
    If $k$ is the smallest index for which $||d_k|| < \varepsilon$, then we will have 
    \begin{equation*}
        |f(x^*) - f(\bar{x}_k)| = O(\varepsilon).
    \end{equation*}
\end{theorem}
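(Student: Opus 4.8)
The plan is to turn the stopping test $\|d_k\|<\varepsilon$ into an $O(\varepsilon)$ bound on the optimality residual of $\bar x_k$ for the sample-average problem $f_k(x)=\frac{1}{|S_k|}\sum_{s=1}^{|S_k|}h(x,\omega^s)$ of \eqref{OSAA}, and then to pass from $f_k$ to the true objective $f$ by a concentration estimate of the kind established in Theorem~\ref{concentration inequality}.

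\emph{From $\|d_k\|$ to a near-optimality certificate for $f_k$.} Since $k$ is the smallest index with $\|d_k\|<\varepsilon$ we have $\|d_{k-1}\|\ge\varepsilon$, so Lemma~\ref{d_k^*} (or a minor variant of it covering the thin band $\varepsilon\le\|d_{k-1}\|\le\sqrt{1+\eta}\,\varepsilon$) gives $\frac{1}{|S_k|}\sum_s\|g_k^s\|=O(\varepsilon)$, i.e.\ by \eqref{g_k^s} the nonanticipativity violation $\frac{1}{|S_k|}\sum_s\|x_k^s-\bar x_k\|$ is $O(\varepsilon)$. I would then write the first-order optimality condition for each subproblem \eqref{x_k^s}: there are $\xi_k^s\in\partial_x h(x_k^s,\omega^s)$ and $\nu_k^s\in N_X(x_k^s)$ with $\xi_k^s+\lambda_{k-1}^s+\rho(x_k^s-\bar x_{k-1})+\nu_k^s=0$. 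Averaging over $s\in S_k$ and using Lemma~\ref{dual constraint} on all of $S_k$ (the freshly sampled scenarios carry $\lambda^s=\vec{0}$) to remove $\frac{1}{|S_k|}\sum_s\lambda_{k-1}^s$ leaves
\[
\textstyle\frac{1}{|S_k|}\sum_s\xi_k^s+\rho(\bar x_k-\bar x_{k-1})+\frac{1}{|S_k|}\sum_s\nu_k^s=0 .
\]
By Assumption~\ref{boundedness of primal variable} each $h(\cdot,\omega^s)$ is Lipschitz on $X$ with a common constant, so $\|\xi_k^s\|$ is bounded; the dual iterates stay bounded by Theorem~\ref{contraction dec}, hence so do the $\nu_k^s$. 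Feeding these bounds, the relation $\|x_k^s-\bar x_k\|=\|g_k^s\|$, and the identity $\frac{1}{|S_k|}\sum_s g_k^s=0$ (which annihilates every cross term) into the averaged identity above, one checks that $\frac{1}{|S_k|}\sum_s\xi_k^s$ is an $O(\|g_k\|)$-approximate subgradient of $f_k$ at $\bar x_k$ and that $\big\langle\frac{1}{|S_k|}\sum_s\nu_k^s,\,x-\bar x_k\big\rangle\le O(\|g_k\|)$ for all $x\in X$, so that for every $x\in X$
\[
f_k(x)\ \ge\ f_k(\bar x_k)-\rho\,\langle\bar x_k-\bar x_{k-1},\,x-\bar x_k\rangle-O(\varepsilon).
\]
Taking $x$ a minimizer of $f_k$ and bounding $\|x-\bar x_k\|\le 2C$ by Assumption~\ref{boundedness of primal variable} gives $0\le f_k(\bar x_k)-\min_X f_k\le 2\rho C\,\|\bar x_k-\bar x_{k-1}\|+O(\varepsilon)$.

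\emph{Passing to $f$.} I would re-run the McDiarmid argument from the proof of Theorem~\ref{concentration inequality}, now applied to $x\mapsto f_k(x)$ (bounded and Lipschitz on the bounded set $X$, exactly as $L^s$ is there) and in its form uniform over $X$, to get $\sup_{x\in X}|f(x)-f_k(x)|=O(\varepsilon)$ conditionally on $\mathcal{L}_{k-1}$ with probability $\ge 1-\varepsilon$ once $|S_k|$ satisfies \eqref{sample complexity}; the uniform version is what permits evaluation at the \emph{random} iterate $\bar x_k$ in addition to $x^*$. Combined with the previous bound and $f(\bar x_k)\ge f(x^*)=\min_X f$, this yields $|f(x^*)-f(\bar x_k)|=O(\varepsilon)$ on that event.

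\emph{Main obstacle.} The one term not yet controlled is the proximal remainder $\rho\,\|\bar x_k-\bar x_{k-1}\|$, which has no counterpart in the deterministic PHA argument and must be shown to be $O(\varepsilon)$ at termination. The route I would take exploits that the stopping rule also forces $\delta_k=\delta_{\min}$, which is reached only after the trust-region scheme has been repeatedly unsuccessful (on which $\lambda_k=\lambda_{k-1}$ and $\bar x_k$ is produced from $\bar x_{k-1}$ by the averaged proximal map $\bar x\mapsto\frac{1}{|S_k|}\sum_s\argmin_{x\in X}\{h(x,\omega^s)+\langle\lambda^s,x-\bar x\rangle+\frac{\rho}{2}\|x-\bar x\|^2\}$, which is firmly nonexpansive); combining the consequent decay of $\|\bar x_k-\bar x_{k-1}\|$ along such a run with Lemma~\ref{boundedness lemma 1} (each newly sampled scenario perturbs $\bar x_k$ by only $O(1/|S_k|)$, while $|S_k|=\Theta(\delta_{\min}^{-4})$ is large) forces $\|\bar x_k-\bar x_{k-1}\|=O(\varepsilon)$ once $\delta_{\min}$ is small relative to $\varepsilon$. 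I expect this conversion of the termination of the $\delta$-dynamics into a quantitative near-stationarity of the prox-center to be the delicate step; absent a fully quantitative version, the argument still delivers $|f(x^*)-f(\bar x_k)|=O(\varepsilon)+O(\|\bar x_k-\bar x_{k-1}\|)$.
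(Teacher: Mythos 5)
Your argument has a genuine gap, and you have correctly identified where it is: the proximal remainder $\rho\,\|\bar x_k-\bar x_{k-1}\|$ is never shown to be $O(\varepsilon)$, and the route you sketch for closing it (arguing that termination of the $\delta$-dynamics forces near-stationarity of the prox-center, via firm nonexpansiveness of the averaged proximal map plus Lemma~\ref{boundedness lemma 1}) is speculative — nothing in the algorithm's update rules ties the displacement of $\bar x_k$ to $\delta_k$ or to $\|d_k\|$, and an unsuccessful iteration freezes $\lambda$ but does not freeze $\bar x$ (new scenarios are still added and $\bar x_k$ is recomputed). So as written your argument only delivers $|f(x^*)-f(\bar x_k)|=O(\varepsilon)+O(\|\bar x_k-\bar x_{k-1}\|)$, which is weaker than the statement.

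The paper avoids this term entirely by not going through first-order optimality conditions at all. Instead it uses weak duality: it compares $f_k(x_k)=\frac{1}{|S_k|}\sum_s h(x_k^s,\omega^s)$ with the augmented Lagrangian value $L_k(x_k,\lambda_k)=\frac{1}{|S_k|}\sum_s\bigl[h(x_k^s,\omega^s)+\langle\lambda_k^s,x_k^s-\bar x_k\rangle+\frac{\rho}{2}\|x_k^s-\bar x_k\|^2\bigr]$. Since Lemma~\ref{d_k^*} gives $\|g_k\|=O(\varepsilon)$, every correction term in $L_k-f_k$ is $O(\varepsilon)$, and weak duality sandwiches the sampled optimum: $L_k(x_k,\lambda_k)\le f_k(x_k^*)\le f_k(x_k)$, whence $|f_k(x_k^*)-f_k(x_k)|=O(\varepsilon)$ with no reference to $\bar x_{k-1}$. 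The remaining steps ($|f_k(x_k)-f_k(\bar x_k)|=O(\varepsilon)$ by bounded subgradients, and the transfer from $f_k$ to $f$ by the concentration argument of Theorem~\ref{concentration inequality}) match the first and last parts of your proposal; your observation that the concentration step should be uniform enough to cover the random point $x_k^*$ is a fair refinement of what the paper writes. If you replace your KKT-averaging step with this duality-gap comparison, the proof closes without ever needing to control $\|\bar x_k-\bar x_{k-1}\|$.
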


\begin{proof}{Proof}
    Consider the difference between the primal objective value 

    \begin{equation*}
        f_k(x_k) =\frac{1}{|S_k|} \sum_{s=1}^{|S_k|} h(x_k^s, \omega^s)
    \end{equation*}

    \noindent and the dual objective value

    \begin{equation*}
        L_k(x_k,\lambda_k) = \frac{1}{|S_k|} \sum_{s=1}^{|S_k|} [h(x_k^s, \omega^s) + \langle \lambda_k^s, x_k^s - \bar{x}_k \rangle + \frac{\rho}{2} ||x_k^s - \bar{x}_k||^2].
    \end{equation*}

    \noindent Since $||d_k|| < \varepsilon$, from Lemma \ref{d_k^*}, we have $||g_k|| = ||x_k - \bar{x}_k|| \leq 4 \varepsilon$. Thus, we have 

    \begin{equation} \label{duality gap}
        L_k(x_k,\lambda_k) - f_k(x_k) = \frac{1}{|S_k|} \sum_{s=1}^{|S_k|} [\langle \lambda_k^s, x_k^s - \bar{x}_k \rangle + \frac{\rho}{2} ||x_k^s - \bar{x}_k||^2] = O(\varepsilon).
    \end{equation}

    \noindent Let $x_k^* = \argmin f_k(x_k)$. From the duality theorem, we have 

    \begin{equation} \label{duality theorem}
        L_k(x_k,\lambda_k) \leq f_k(x_k^*) \leq f_k(x_k).
    \end{equation}

    \noindent  \eqref{duality gap} and \eqref{duality theorem} imply that 

    \begin{equation} \label{difference 1}
        |f_k(x_k^*) - f_k(x_k)| = O(\varepsilon).
    \end{equation}

    \noindent On the other hand, using Assumption \ref{Boundedness of subgradient}, we have for every scenario $s$,

    \begin{equation*}
        |h(x_k^s,\omega^s) - h(\bar{x}_k, \omega^s)| \leq G \cdot |x_k^s - \bar{x}_k|.
    \end{equation*}

    \noindent Thus, we have 

    \begin{equation} \label{difference 2}
        |f_k(x_k) - f_k(\bar{x}_k)| = O(\varepsilon).
    \end{equation}

    \noindent Eq. \eqref{difference 1} and Eq. \eqref{difference 2} imply that 

    \begin{equation} \label{difference 3}
        |f_k(x_k^*) - f_k(\bar{x}_k)| = O(\varepsilon).
    \end{equation}

    \noindent Also, let $x^* = \argmin f(x)$, we will show that 

    \begin{equation} \label{difference 4}
        |f(x^*) - f(x_k^*)| = O(\varepsilon).
    \end{equation}

    \noindent First, note that we can use the same concentration inequality in Theorem \ref{concentration inequality} to show that with probability at least $(1-\alpha)^2$,  

    \begin{equation} \label{difference 5}
        |f_k(x^*) - f(x^*)| \leq \varepsilon \quad \textit{and} \quad |f_k(x_k^*) - f(x_k^*)| \leq \varepsilon. 
    \end{equation}

    \noindent Thus, 

    \begin{equation*}
        f_k(x^*) \leq f(x^*) + \varepsilon \quad \textit{and} \quad f(x_k^*) \leq f_k(x_k^*) +\varepsilon. 
    \end{equation*}

    \noindent Based on the definition of $x^*$ and $x_k^*$, we have

    \begin{equation*}
        f(x_k^*) \leq f_k(x_k^*) +\varepsilon \leq f_k(x^*) + \varepsilon \leq f(x^*) + 2\varepsilon,
    \end{equation*}

    \noindent which proves Eq. \eqref{difference 4}.  Finally, we conclude the proof by combining Eq. \eqref{difference 3} - Eq. \eqref{difference 5}. \Halmos   
\end{proof}

\section{Preliminary Computational Results} 

Our computational experiments are based on data sets available at the USC 3D Lab~\footnote{https://github.com/USC3DLAB/SD}. Specifically, these problems are two-stage stochastic linear programming problems, and we have tested LandS3, pgp2, 20-term, and baa99-20. The study considers three different methods: Classic PHA~\cite{RW1991}, randomized PHA~\cite{B2020}, and the sampling-based PHA presented in this paper. We track the increase in dual objective values with respect to the number of QPs solved for all algorithms. The figures also include the 95\% upper and lower bounds of the objective value estimates obtained from stochastic decomposition (SD)~\cite{sen2016} as a benchmark. Because the stopping rule for SD is based on several replications, together with the use of a compromise decision \cite{S2016}, the upper and lower bounds used for verification provide reasonably good bounds on the optimal value. The algorithms of this paper were implemented on a MacBook Pro 2023 with a 2.6GHz 12-core Apple M3 Pro processor and 32GB of 2400MHz DDR4 onboard memory. The code used in this research is written in Julia and is available at the following GitHub repository: \href{https://github.com/yhz0/progressive_hedging_scs}{Progressive Hedging SCS} (accessed on August 24, 2024), and the computational results are shown in Figure \ref{scs pha}.

\begin{figure}[H]
     \centering
     \caption{Dual objective values for different combinations (data,algorithm).}
     \begin{subfigure}[b]{0.45\textwidth}
         \centering
        \includegraphics[width=\textwidth]{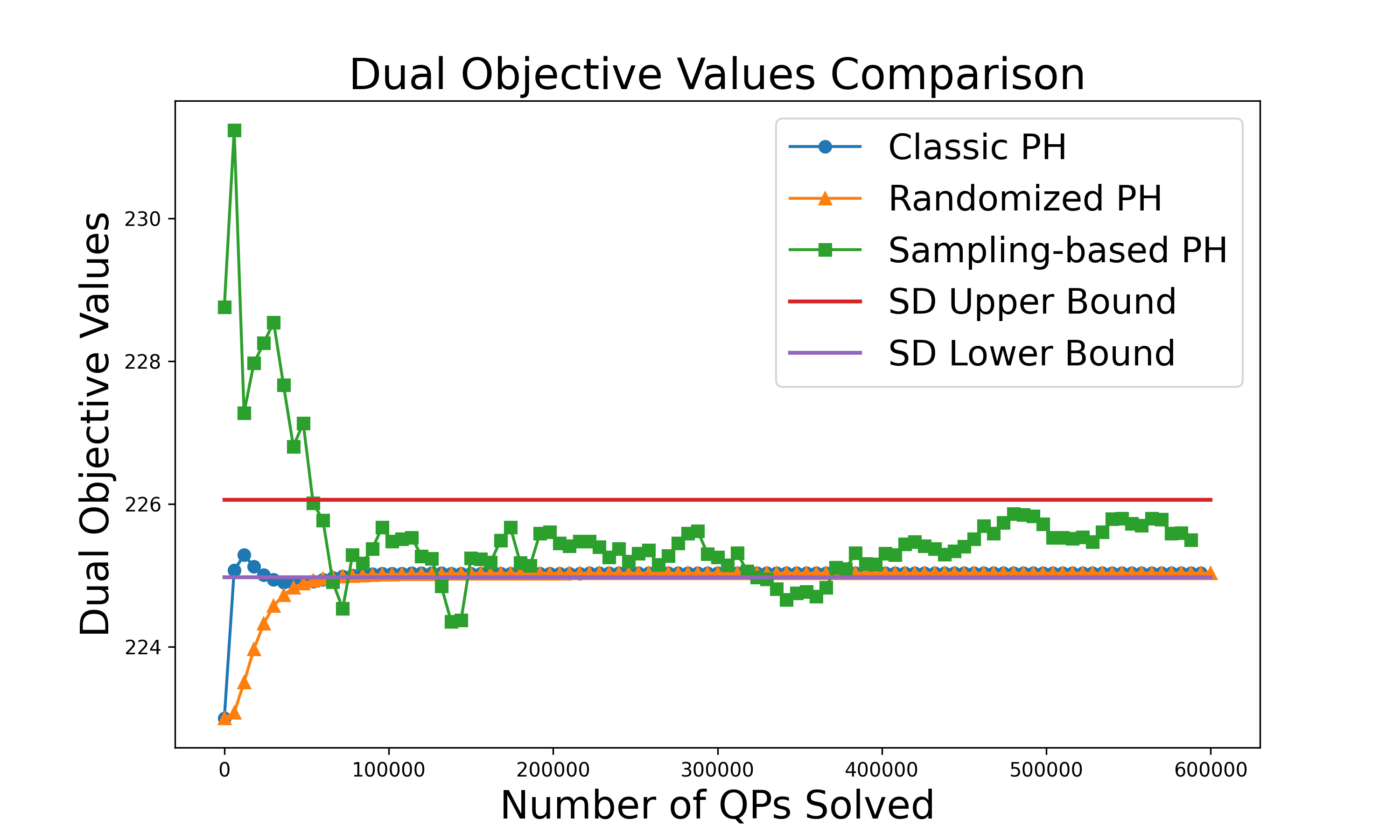}
         \caption{LandS3}
     \end{subfigure}
     \hfill
     \begin{subfigure}[b]{0.45\textwidth}
         \centering
         \includegraphics[width=\textwidth]{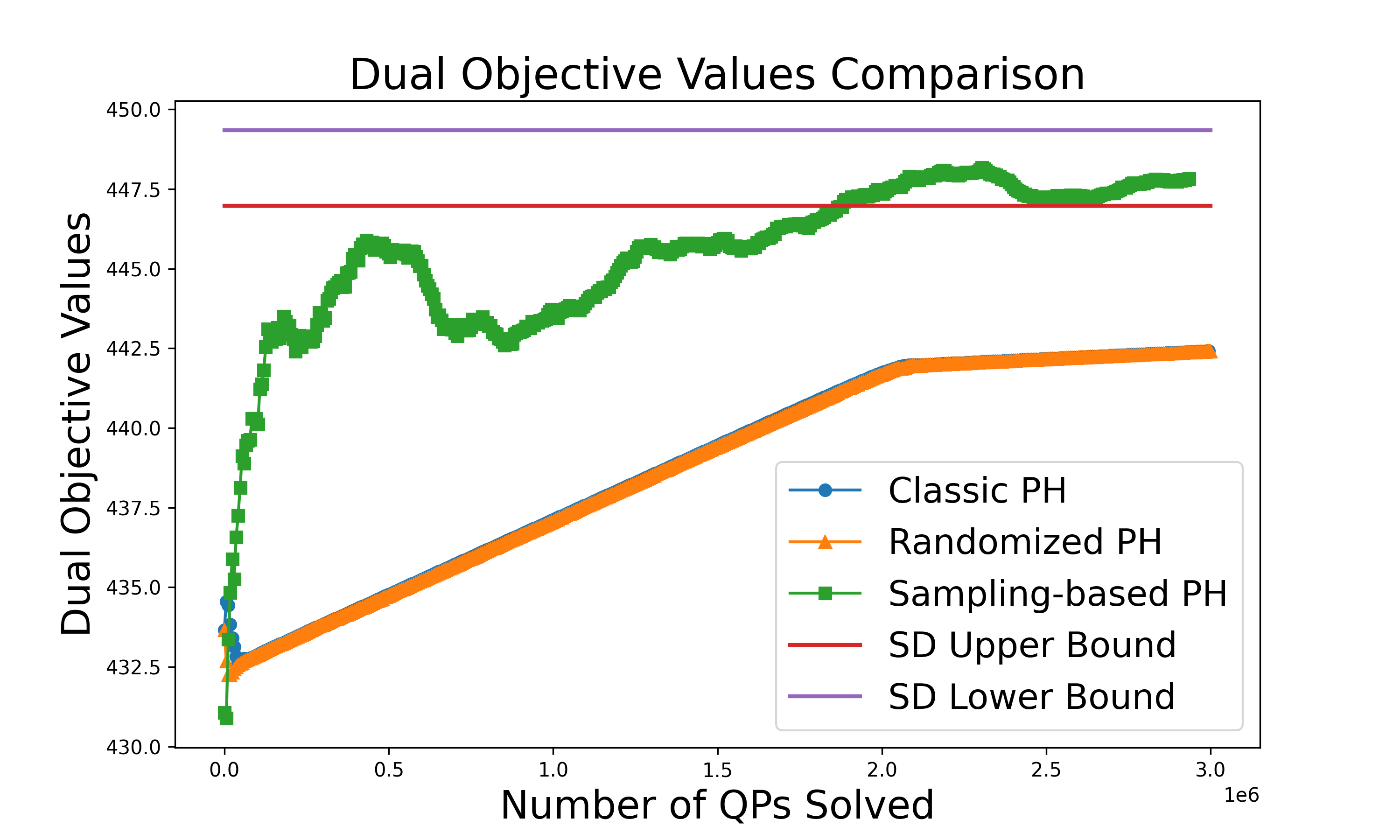}
         \caption{pgp2}
     \end{subfigure} \\
     \begin{subfigure}[b]{0.45\textwidth}
         \centering
         \includegraphics[width=\textwidth]{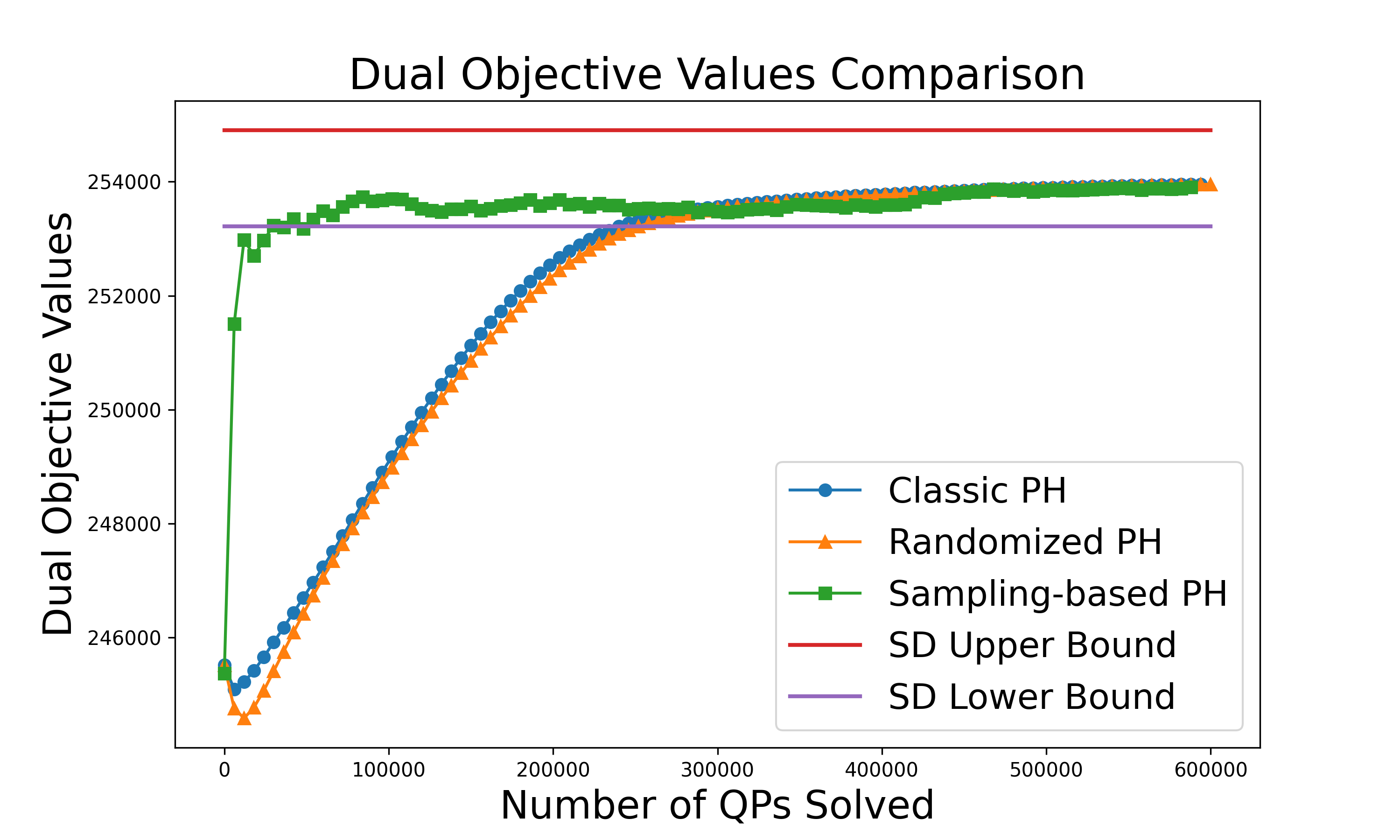}
         \caption{20-term}
     \end{subfigure}
     \hfill
     \begin{subfigure}[b]{0.45\textwidth}
         \centering
         \includegraphics[width=\textwidth]{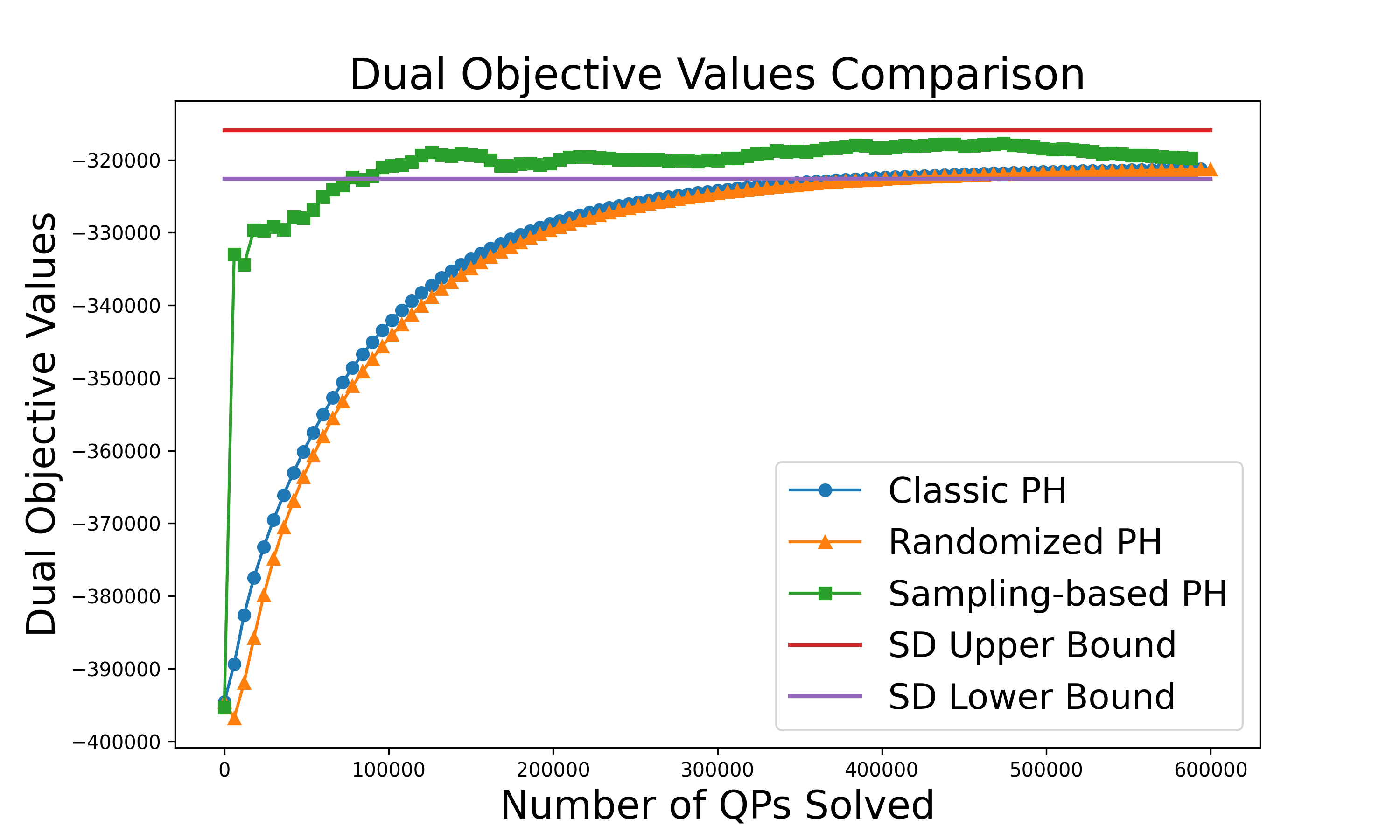}
         \caption{baa99-20}
     \end{subfigure}
        \label{scs pha}
\end{figure}

\noindent \textbf{Remarks}: (a) The objective function values of sampling-based PHA reflect the submartingale property demonstrated in Corollary \ref{submartingale}. (b) For problems which require more scenarios to converge, the sampling-based PH algorithm tends to converge faster (smaller number of scenarios) than classic PH algorithm as well as the randomized PH algorithm. In other words, the adaptive sampling-based PH algorithm is more data-efficient and more scalable than other PH algorithms for the instances explored in this study. 

\section{Conclusion}

We propose a novel adaptive sampling-based PH algorithm designed for large-scale stochastic optimization problems. Compared to the classic PH algorithm, the proposed approach introduces several key innovations:

\begin{enumerate} \item It does not require explicit knowledge of the distribution of random variables, greatly broadening the applicability of the PH algorithm. \item By leveraging sample complexity analysis and concentration inequalities, it adaptively updates the sample size, making the algorithm more robust and stochastic compared to the traditional PH algorithm, which relies solely on SAA. \item It employs stochastic conjugate subgradient directions and line-search techniques to update the dual variables, ensuring the submartingale property. This property guarantees a sufficient increase in expectation and serves as a critical foundation for establishing the convergence rate. \item Unlike many existing methods that update the penalty parameter through computational experimentation~\cite{Z2016}, our approach explicitly updates it based on optimization principles, further enhancing the algorithm's efficiency. \end{enumerate}

The convergence and convergence rate of the proposed algorithm are established by combining classical optimization techniques with modern stochastic frameworks. Specifically, Theorem \ref{contraction dec} demonstrates that the sequence of dual decision variables satisfies a contraction property in expectation, analogous to the contraction mapping principle in deterministic optimization. This property is a foundational concept also emphasized in the original PH paper~\cite{RW1991}. Furthermore, Theorem \ref{Convergence rate} leverages the submartingale property and the Renewal-Reward Theorem—a modern stochastic framework~\cite{B2014}—to establish the convergence rate of the algorithm.

In addition to the theoretical guarantees, our computational results highlight the algorithm's strong performance across several test instances. In these cases, the algorithm achieved objective function values within an acceptable range, as benchmarked against a well-tested version of the SD algorithm. The SD algorithm incorporates numerous features to ensure reliable outputs (e.g., compromise decisions~\cite{S2016}), thereby indicating that the solutions generated by our proposed method are equally reliable.


%
%
%

\ACKNOWLEDGMENT{The research reported in this paper was funded by DE‐SC0023361, subaward G002122-7510 from the Department of Energy under the Advanced Scientific Computing Research program.}







\bibliographystyle{abbrv}
\bibliography{main}

\end{document}